\documentclass[11pt]{article}

\usepackage{graphicx}
\usepackage{amssymb,epsfig}
\usepackage{amsmath}
\usepackage[francais,english]{babel}
\usepackage[latin1]{inputenc}
\usepackage[T1]{fontenc}

\def\div{{\rm div \;}}

\def\N{\mathbb{N}}
\def\R{\mathbb{R}}

\def\E{\mathbb{E}} 
\def\P{\mathbb{P}} 

\newtheorem{proposition}{Proposition}

\newtheorem{lemma}{Lemma}
\newtheorem{remark}{Remark}


\def\acknowledgement{\medskip {\bf {Acknowledgements}: }}

\hoffset=-1.5truecm
\voffset=-2truecm
\textwidth=15truecm
\textheight=24truecm


\title{A mathematical formalization\\ of the parallel replica dynamics}
\author{C. Le Bris$^1$, T. Leli\`evre$^1$,  M. Luskin$^2$, D. Perez$^3$\\
{\footnotesize 1- CERMICS, \'Ecole des Ponts ParisTech,}\\ 
{\footnotesize 6 \& 8, avenue Blaise Pascal,}\\
{\footnotesize  77455 Marne-La-Vall\'ee, FRANCE}\\ 
{\footnotesize and}\\
{\footnotesize  INRIA Rocquencourt, MICMAC project-team,}\\
{\footnotesize  Domaine de Voluceau, B.P. 105,}\\
{\footnotesize  78153 Le Chesnay Cedex, FRANCE.}\\
{\footnotesize\tt \{lebris,lelievre\}@cermics.enpc.fr}
\\
{\footnotesize 2- School of Mathematics, University of Minnesota,}\\
{\footnotesize  206 Church Street SE,
Minneapolis, MN 55455, USA}\\
{\footnotesize\tt luskin@math.umn.edu}
\\
{\footnotesize 3- Theoretical Division T-1, Los Alamos National Laboratory, Los Alamos, NM, 87544, USA}\\
{\footnotesize\tt danny\_perez@lanl.gov}}

\begin{document}

\selectlanguage{english}{

\maketitle

\section{Motivation and context}

The purpose of this article is to lay the mathematical foundations of a
well known numerical approach in computational statistical physics, namely the {\emph{parallel replica dynamics}, introduced by A.F.~Voter in~\cite{voter-98}
 and improved and popularized
 in the context of Molecular Dynamics simulations in~\cite{voter-montalenti-germann-02,kum-dickson-stuart-uberuaga-voter-04,uberuaga-stuart-voter-07,uberuaga-hoagland-voter-valone-07},
 for example. The aim of the approach is to efficiently generate a
 coarse-grained evolution (in terms of state-to-state dynamics) of a
 given stochastic process. The approach formally consists in
 concurrently considering several realizations of the stochastic
 process, and tracking among the realizations that which, the soonest, undergoes
 an 
 important transition. Using specific properties of the dynamics
 generated, a computational speed-up is obtained. In the best cases,
 this speed-up approaches the number of realizations considered.  

By drawing connections with the theory of Markov processes and, in
particular, 
exploiting the notion of \emph{quasi-stationary distribution}, we provide a mathematical setting
appropriate for assessing theoretically the performance of the approach, and possibly improving it.

\subsection{Description of the parallel replica dynamics}}

Consider a stochastic dynamics $X_t$ in $\R^d$. In the following, we will focus on the overdamped Langevin dynamics:
\begin{equation}\label{eq:eds}
dX_t = -\nabla V (X_t) \, dt + \sqrt{2 \beta^{-1}} dW_t,
\end{equation}
 driven by a potential energy $V: \R^d \to
\R$.  
 However, the algorithm we study equally applies to a Langevin equation or a kinetic 
Monte-Carlo (KMC) dynamics. 

Intuitively, the
dynamics~\eqref{eq:eds} may be seen as the motion of $X_t$ in the energy landscape
defined by the potential~$V$.  Such a landscape typically exhibits many
wells. The process $X_t$ progressively  discovers and successively explores
these wells. The time spent by the process in a well before it hops to another one
might be computationally prohibitively long, thus the need for an alternative numerical approach
 to the direct simulation of the process. The parallel
replica dynamics is such an approach (others are discussed in~\cite{voter-montalenti-germann-02}).

We henceforth assume that $V:\R^d \to \R$ is a smooth potential, such
that $\int \exp(-\beta V) < \infty$. We consider  an application 
 $${\mathcal S}: \R^d \to \N$$
 that associates to a given position $x$  a state ${\mathcal S}(x)$ in a
 discrete space, say $\N$. In practice and for instance, ${\mathcal S}$  maps a position $x$ to the local minimum
reached by the gradient dynamics ($\dot{y} = - \nabla V(y)$) starting
from $x$, and these local minima are numbered (as they are discovered as
the algorithm proceeds). For simplicity, we may think of~${\mathcal S}$ as any discrete valued map. The state-to-state dynamics we will henceforth consider as reference dynamics
 is  $({\mathcal
  S}(X_t))_{t \ge 0}$.

\medskip


 The aim of the parallel replica dynamics is to generate a trajectory
$(S_t)_{t \ge 0}$ which is more efficiently computed than, but shares
the same law as, the original
trajectory $({\mathcal
  S}(X_t))_{t \ge 0}$ obtained from $X_t$, the solution to~\eqref{eq:eds}. Two adjustable
times, $\tau_{dephase}$ and $\tau_{corr}$, will enter the definition of
the dynamics $(S_t)_{t \ge 0}$. In practice, these times may be
state-dependent, but in many practical situations, they are fixed {\em a
  priori} and taken to be equal. One purpose of the analysis below is  to provide a theoretical guideline for the choice of these two times, and in particular $\tau_{corr}$.

\medskip

The parallel replica dynamics as implemented in~\cite{voter-98} consists of the following flow
chart. Consider the initial condition $X^{ref}_0=X_0$ for a reference
walker $(X^{ref}_{t})_{t \ge 0}$. Consider the associated initial condition for the state
dynamics $S_0={\mathcal S}(X_0)$, and set the simulation clock
$T_{simu}$ to zero. Then iterate on the following steps:
\begin{enumerate}
\item {\em Decorrelation step}: Let the reference walker
  $(X^{ref}_{T_{simu}+t})_{t \ge 0}$ evolve according to~\eqref{eq:eds} over a time interval $t \in [0, \tau_{corr}]$. Then
\begin{itemize}
\item If the process leaves the well during this time interval, namely
  if there exists a time $t \le \tau_{corr}$ such that ${\mathcal
    S} \left(X^{ref}_{T_{simu}+t}\right) \neq {\mathcal
    S} \left(X^{ref}_{T_{simu}}\right)$, then
  \begin{itemize}
  \item (i) advance the simulation clock by
  $\tau_{corr}$: $T_{simu}=T_{simu}+\tau_{corr}$, and 
\item (ii) return to 1.
  \end{itemize}
\item If not, then
\begin{itemize}
  \item (i) advance the simulation clock by $\tau_{corr}$:
  $T_{simu}=T_{simu}+\tau_{corr}$, and 
\item (ii) proceed to 2.
\end{itemize}
\end{itemize}
During this step, the state dynamics $S_t$ is defined as:
$$S_t={\mathcal S}(X^{ref}_t)$$
and is thus exact.
\item {\em Dephasing step} : Replicate the walker $X^{ref}_{T_{simu}}$
  into $N$ replicas (in practice $N$ typically ranges between $10^2$ and $10^4$), that is, for $k
  \in \{1, \ldots, N\}$, set:
$$X^k_{T_{simu}}=X^{ref}_{T_{simu}}.$$
Let these replicas evolve independently (according to~\eqref{eq:eds}
using independent Brownian motions) over a time interval $t \in [0,
\tau_{dephase}]$ and using the following rule: if one of the replica
(say $K$) leaves the well during the time interval
$[0,\tau_{dephase}]$, then this
particular replica  is eliminated and the dephasing step for this
particular replica is restarted from the initial position
($X^{K}_{T_{simu}}=X^{ref}_{T_{simu}}$). Throughout this step, the
simulation clock is not advanced, nor are the state dynamics $S_t$ updated. With a
slight abuse of notation, we therefore still denote by $X^k_{T_{simu}}$ the positions
of the replicas at the end of this dephasing step.
\item {\em Parallel step} : Let all the replicas evolve independently
  and denote by 
$$T^k_{W}= \inf\{ t \ge 0, \, {\mathcal S}(X^k_{T_{simu}+t}) \neq {\mathcal S}(X^k_{T_{simu}}) \}$$
the first time the $k$-th replica leaves the current well (denoted by $W=\{x,\, {\mathcal S}(x)={\mathcal S}(X^1_{T_{simu}} \}$). Introduce the first observed escape time over all the replicas
$$T=\inf_{k} T^k_{W}$$
 and the index 
$$K_0=\arg\inf_{k} T^k_{W}$$
of the replica that first leaves the well. Note that
$T=T^{K_0}_{W}$. Note also that the probability that many replica may leave simultaneously is zero, so that the index $K_0$ is well-defined. The parallel step is terminated at the first observed escape event,
in which case the simulation clock is advanced by $NT$ (at least for synchronized CPUs, see the discussion at the end of Section~\ref{sec:justif_par}) and the position
of the reference walker is set to the position of the particular replica that just
underwent the transition:
$$T_{simu}=T_{simu} + N T \text{  and   } X^{ref}_{T_{simu}+NT}=X^{K_0}_{T_{simu}+T}.$$
Over the whole time interval $[T_{simu},T_{simu} + NT]$ of length $NT$, the state dynamics $S_t$ is constant and defined as:
$$S_t={\mathcal S}(X^{1}_{T_{simu}}).$$
{\em Return to 1.}
\end{enumerate}


\medskip
  
The question we are interested in is to understand the error intrinsically
present in the numerical approach, namely the
difference between the law of $(S_t)_{t \ge 0}$ as defined by the above
parallel replica dynamics and the law of
$({\mathcal S}(X_t))_{t \ge 0}$, $X_t$ being the solution
to~\eqref{eq:eds}.

\medskip

\medskip

Besides a formalized answer to the above question (see
Proposition~\ref{prop:error} below), the main outcomes of our work are the
following:
\begin{itemize}
\item the aim of the dephasing step of the parallel replica dynamics is to sample  the  so-called \emph{quasi-stationary distribution} of the well currently
  visited by the dynamics; efficient approaches for completing this goal
  include the Fleming-Viot algorithm, which is a small
  variation of the approach originally implemented  in the  dephasing step of the algorithm;
\item the parallel replica dynamics can be proven to be, in a
  mathematical sense, an
  approximation of the original dynamics and this holds
  irrespective whether this original  dynamics is metastable or
  not; of course the
    efficiency of the approach is improved, and the calibration of the adjustable
    parameters is easier, when metastability occurs in a sense made
    precise below in terms of the spectral properties of a certain operator.
\item for the parallel replica dynamics to be maximally efficient, the adjustable parameter  $\tau_{corr}$ can be calibrated in terms
  of characteristic quantities of the original dynamics
  (see Equation~\eqref{eq:adjust} below); the relevant quantities are 
  expressed in terms of eigenvalues of a given operator associated to
  the dynamics, but can be difficult to practically determine for an arbitrary definition of states.
\end{itemize}

\subsection{A first discussion and an outline of the article}

We first notice that if $\tau_{corr}$ is chosen infinite, the parallel step is never activated. The parallel replica dynamics then amounts
to a simple, classical simulation of the dynamics~\eqref{eq:eds}. We are thus interested in situations where $\tau_{corr}$ is chosen finite.

\medskip

For the parallel step not to introduce any error, two essential assumptions are required on the replicas obtained after the dephasing step:
\begin{itemize}
\item [{[H1]}] the initial positions $X^k_{T_{simu}}$ for the parallel step
  are {\em i.i.d.} and
\item  [{[H2]}] conditionally on the past ${\mathcal F}_{T_{simu}}$ (${\mathcal F}_t$ being the filtration generated by the Brownian motions used in the simulation up to time $t$), the
  stopping times $T^k_{W}$ are {\em exponentially distributed} and are {\em independent of the next visited state} (for all $k$, and thus for $k=1$ since we suppose all the $X^k$ initially i.i.d.).
\end{itemize}
 
 \noindent Under these two assumptions (see Section~\ref{sec:justif_par} below), 
\begin{itemize}
\item [{(i)}] $NT$ has the same law as $T^1_{W}$
(where we recall $T= \inf_{k} T^k_{W}$) and
\item [{(ii)}] the next 
state visited by the replica that is the first to undergo a transition has the
same law as the next visited state for one single arbitrary replica.
\end{itemize}
In other words, under assumptions  [H1] and [H2], the parallel step is "exact"
in the sense that it updates the current state into a new state exactly
equal (in law) to the
state reached when one considers only one replica
distributed according to the distribution obtained after the dephasing
step, and waits for the time for this replica to undergo a transition to
a new well. In terms of wall-clock time, the speed-up is of order $N$. This is
the evident practical interest of the algorithm.

Note that a motivation for considering [H2] is that a 
state\--to\--state dynamics $U_t$ is a continuous-time Markov process if and only if it
satisfies the following two conditions:
\begin{itemize}
\item the list of visited states denoted by
$$(\overline{U}_1,\overline{U}_2, \ldots, \overline{U}_n, \ldots)$$
 is a Markov chain (a discrete-time and discrete-space Markov process) and
\item the times successively spent in each state, denoted by $$(H_1,H_2, \ldots, H_n, \ldots),$$
(namely $U_t=\overline{U}_1$ for $t \in [0, H_1)$, $U_t=\overline{U}_i$ for $t \in [H_1+ \ldots + H_{i-1}, H_1 + \ldots + H_i)$),
are such that (i) the law of $H_i$ given $\overline{U}_i$ is exponential
and (ii) conditionally on  $\overline{U}_i$, the time $H_i$ spent in the
well  and the next well visited $\overline{U}_{i+1}$ are {\em independent} random variables.
\end{itemize}
Thus, if ${\mathcal S}(X_t)$ was a Markov process, the algorithm
would be exact, and the decorrelation step would not be needed. Each cycle of the decorrelation step can thus be seen as a test of the Markov character of ${\mathcal S}(X_t)$, in that, upon successful decorrelation, 
the system is deemed to be a proper starting point for a subsequent parallel stage. One may see our analysis as a way to quantify 
the error introduced by this assumption.

\medskip

After a few remarks on the underlying dynamics in the next section, our
work is organized as follows. In Section~\ref{sec:dephasing}, we analyze
the dephasing step. Then, Section~\ref{sec:justif_par} is devoted to the
parallel step. Finally, our main result is presented in
Section~\ref{sec:decorrelation}, where we analyze the error introduced by the decorrelation step.

\subsection{Remarks on the reference dynamics}
\label{ssec:about}

\subsubsection{Overdamped Langevin and recrossing events}

The algorithm as described above may actually look weird for the
continuous-in-time overdamped dynamics~\eqref{eq:eds}. Indeed, the first
time the process leaves a given state is immediately posterior to the
time it entered that state (this phenomenon is called
\emph{recrossing}). Thus, after a parallel step,
the
process \emph{cannot} remain in the new visited state during the first correlation time interval: the first decorrelation step is always unsuccessful for such a dynamics. One simple way to overcome
this difficulty is to let the reference walker evolve for a fixed small amount of
time after the parallel step, before proceeding  to the next
decorrelation step. This allows the process to leave the vicinity of the
boundary of the new visited state. Another way to deal with this difficulty would be to change the decorrelation step as: Let the reference walker evolve according to~\eqref{eq:eds} over a time $t$ that is the minimum time such that there is no transition over $[t-\tau_{corr},t]$.

\subsubsection{Generalization to other dynamics}

We would like to mention that our analysis carries over to kinetic Monte Carlo models, namely for a pure jump Markov process valued in a finite state space. In this case, the map ${\mathcal S}$ reduces the original discrete state space to a coarser one.

On the other hand, it is unclear how to generalize our study to a Langevin dynamics:
\begin{equation}\label{eq:lang}
\left\{
\begin{aligned}
dq_t &= M^{-1} p_t \, dt, \\
dp_t &= -\nabla V (q_t) \, dt - \gamma M^{-1} p_t \, dt + \sqrt{2 \gamma \beta^{-1}} dW_t,
\end{aligned}
\right.
\end{equation}
since the underlying elliptic degenerate infinitesimal generator causes
additional difficulties for the spectral analysis. Notice that the algorithm itself however readily applies to such a dynamics.


\section{The quasi-stationary distribution and the formalization of the dephasing step}\label{sec:dephasing}

To start with, we discuss here the dephasing step. As mentioned above (see [H1] and
[H2]), the purpose of this step  is to generate {\em independently distributed}
initial conditions for the parallel step, and to complete this according
to a {\em distribution such that the escape time is exponentially
  distributed and independent of the next visited state}. We now explain
here how to create, to some extent, an ideal dephasing step that
satisfies both conditions [H1] and [H2]. The main ingredient of our
formalization is the notion of
\emph{quasi-stationary distribution}, henceforth abbreviated as QSD.

\subsection{The quasi-stationary distribution}\label{sec:QSD}

Consider a state $W \subset \R^d$, and let
$$T^x_W = \inf \{ t > 0, X^x_t \not \in W \}$$
be the first escape time of $W$
for the stochastic process $X^x_t$ satisfying~\eqref{eq:eds} and
starting at $x\in W$ at time $0$. 
The state $W$ is in practice a level set of the map ${\mathcal S}$, and
we suppose in the following that $W$ is fixed, and is a bounded
Lipschitz domain of $\R^d$.  A quasi-stationary
distribution  $\nu$, for the stochastic process $X_t$ and associated to
$W$, is a distribution with support in $W$ and such that, for any
positive time $t$ and for any measurable set $A \subset W$,
\begin{equation}\label{eq:QSD}
\nu (A) = \frac{\displaystyle \int_W \P(X_t^x \in A , \, t < T_W^x) \, d\nu }{\displaystyle \int_W \P( t < T_W^x) \, d\nu}.
\end{equation}
In words, if $X_0$ is distributed according to $\nu$, then, conditionally on not having left the well $W$ up to time $t$, $X_t$ is still distributed according to $\nu$. 

For the convenience of the reader, we collect in this section a few
elementary properties of the QSD. For more details on the theory, we
refer, for example, to~\cite{cattiaux-collet-lambert-martinez-meleard-san-martin-09,martinez-san-martin-04,mandl-61,collet-martinez-san-martin-95,steinsaltz-evans-07,pinsky-85,ferrari-kesten-martinez-picco-95,ferrari-maric-07,ferrari-martinez-san-martin-96}.

Let $X_t$ be the stochastic process satisfying~\eqref{eq:eds}. We introduce its infinitesimal generator:
$$L= - \nabla V \cdot \nabla + \beta^{-1} \Delta,$$ 
and we denote by $L^*=\div(\nabla V \cdot) + \beta^{-1} \Delta$ its adjoint.

We start by stating a Feynman-Kac formula that will be useful below.
\begin{proposition}\label{prop:FK}
Consider a smooth solution $v(t,x)$ to the problem:
$$
\left\{
\begin{aligned}
\partial_t v &= L v \text{ for $t \ge 0$, $x \in W$, }\\
v&= \varphi \text{ on $\partial  W$,} \\
v(0,x)&=v_0(x),
\end{aligned}
\right.
$$
where $\varphi$ is a smooth function.
Then, $$v(t,x) = \E \left(1_{T_W^x < t}\, \varphi(X_{T_W^x}^x) \right) + \E \left(1_{T_W^x \ge t}\, v_0(X_{t}^x) \right),$$
where $X^x_t$ is the process starting at $x$ at time $0$ and $T_W^x$ the first exit time from~$W$.
\end{proposition}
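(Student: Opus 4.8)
The plan is to recognize this as a standard Dynkin/It\^o verification argument: since $v$ is assumed to be a \emph{smooth solution}, we do not need any existence theory, only to check that the stated right-hand side reproduces $v$, using the PDE it satisfies together with the two boundary/initial conditions.

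First I would fix $t>0$ and $x\in W$ and consider the process $s\mapsto v(t-s,X_s^x)$ for $s\in[0,t\wedge T_W^x]$, where $X^x$ solves~\eqref{eq:eds}. Applying It\^o's formula and using $L=-\nabla V\cdot\nabla+\beta^{-1}\Delta$ gives
\begin{equation*}
d\bigl[v(t-s,X_s^x)\bigr]=\bigl(-\partial_t v+Lv\bigr)(t-s,X_s^x)\,ds+\sqrt{2\beta^{-1}}\,\nabla v(t-s,X_s^x)\cdot dW_s .
\end{equation*}
Since $\partial_t v=Lv$ on $W$, the finite-variation part vanishes, so $(v(t-s,X_s^x))_{0\le s\le t\wedge T_W^x}$ is a local martingale.

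Next I would upgrade this to a genuine martingale on $[0,t\wedge T_W^x]$ so that optional stopping applies at the bounded stopping time $\tau:=t\wedge T_W^x$. Because $W$ is a bounded Lipschitz domain and $v$ is smooth, $v$ and $\nabla v$ are bounded on $\overline W\times[0,t]$, so the stopped stochastic integral has integrable quadratic variation; if one wishes to be careful about regularity up to $\partial W$, one introduces stopping times $\tau_n\uparrow\tau$ keeping the process at distance $1/n$ from $\partial W$, uses the martingale property at $\tau_n$, and passes to the limit by dominated convergence using the boundedness of $v$ on $\overline W\times[0,t]$. Evaluating at $s=0$ and $s=\tau$ then yields $v(t,x)=\E\bigl[v(t-\tau,X_\tau^x)\bigr]$.

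Finally I would split on whether the process has left $W$ before time $t$. On $\{T_W^x<t\}$ one has $\tau=T_W^x$ and, by continuity of $X^x$, $X_\tau^x\in\partial W$, so the boundary condition gives $v(t-\tau,X_\tau^x)=\varphi(X_{T_W^x}^x)$; on $\{T_W^x\ge t\}$ one has $\tau=t$, hence $v(t-\tau,X_\tau^x)=v(0,X_t^x)=v_0(X_t^x)$ by the initial condition. Adding the two contributions gives exactly the claimed identity. The only genuinely delicate point is justifying the martingale/optional-stopping step up to the exit time from $W$ while $v$ is only assumed smooth on $W$; this is precisely what the localization argument sketched above takes care of, and is where the boundedness of $W$ and the smoothness of $v$ and $\varphi$ are used.
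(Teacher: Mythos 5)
Your proof is correct and follows essentially the same route as the paper: time-reverse to $u(s,x)=v(t-s,x)$, apply It\^o's formula so the drift term vanishes by the PDE, use the boundedness of $W$ and smoothness of $v$ to pass from a local martingale to a true expectation identity at the bounded stopping time $t\wedge T_W^x$, and then split on $\{T_W^x<t\}$ versus $\{T_W^x\ge t\}$ using the boundary and initial conditions. The extra care you take with the localization $\tau_n\uparrow\tau$ near $\partial W$ is a slightly more detailed justification of the step the paper handles in one sentence, but the argument is the same.
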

\begin{proof}
Fix a time $t$ and consider $u(s,x)=v(t-s,x)$, which satisfies
$$
\left\{
\begin{aligned}
\partial_s u +L u&= 0 \text{ for $s \in [0,t]$, $x \in W$, }\\
u&= \varphi \text{ on $\partial W$,} \\
u(t,x)&=v_0(x).
\end{aligned}
\right.
$$
Using It\^o calculus, we see that: $\forall s \in [0, T_W^x \wedge t]$,
\begin{align*}
u(s,X_s^x) &= u(0,x) + \int_0^s (\partial_s u + L u) (r,X_r^x) \, dr + \sqrt{2 \beta^{-1}} \int_0^s \nabla u(r,X_r^x) \, dW_r \\
&= u(0,x) + M_s,
\end{align*}
where $M_s=\sqrt{2 \beta^{-1}} \int_0^s \nabla u(r,X_r^x) \, dW_r$ is a
local martingale. Since $u$ is assumed to be smooth, and $X_r^x$ lives
in the bounded domain $W$ up to time $T_W^x \wedge t$, we conclude:
\begin{align*}
v(t,x)=u(0,x) &= \E \left( u(t \wedge T_W^x ,X_{t \wedge T_W^x}^x) \right) \\
 &= \E \left( 1_{T_W^x < t } \, u(T_W^x ,X_{T_W^x}^x) \right) + \E \left( 1_{T_W^x \ge t } \, u(t  ,X_{t}^x) \right) \\
 &= \E \left( 1_{T_W^x < t } \, \varphi(X_{T_W^x}^x) \right) + \E \left( 1_{T_W^x \ge t } \, v_0(X_{t}^x) \right).
\end{align*}
\end{proof}

The quasi-stationary distribution is related to spectral properties of the generator $L$ supplemented with zero Dirichlet boundary conditions on $\partial W$. Let us make this precise. We introduce the invariant measure for the dynamics $X_t$:
$$\,d\mu=Z^{-1} \exp(-\beta V(x)) \, dx$$
where $Z=\int_{\R^d} \exp(-\beta V)$. It is well known that the dynamics~\eqref{eq:eds} is reversible with respect to~$\mu$: for all smooth test functions $f$ and $g$,
$$\int_{\R^d} f \,L g \, d \mu = \int_{\R^d} g \,L f \, d \mu = - \beta^{-1} \int_{\R^d} \nabla f \cdot \nabla g \, \,d\mu.$$
This in turn implies that the dynamics restricted to $W$ is
reversible with respect to $\mu$ restricted to $W$, that is: for all
smooth test functions $f$ and $g$ vanishing on~$\partial W$,
$$\int_W f\, L g \, d \mu = \int_W g\, L f \, d \mu= - \beta^{-1} \int_W \nabla f \cdot \nabla g \, \,d\mu.$$
Thus, the operator $L$ with Dirichlet boundary conditions on $\partial
W$ is negative-definite and symmetric with respect to the scalar product 
\begin{equation}\label{eq:scal_prod}
\langle f , g \rangle_\mu = \int_W f g \, d \mu.
\end{equation}
We denote by $L^2_\mu$ the Hilbert space of functions from $W$ to $\R$ which are square integrable with respect to $\mu$, equipped with the scalar product~\eqref{eq:scal_prod}.
 Since $V$ is assumed to be smooth, the inverse of the operator $L$ from
 $L^2_\mu$ to $L^2_\mu$ is compact, and we thus introduce its
 eigenvalues $(-\lambda_1,-\lambda_2,\ldots,-\lambda_n,\ldots)$ counted
 with multiplicity:
\begin{equation}\label{eq:vp}
0 > - \lambda_1 > - \lambda_2 \ge \ldots \ge - \lambda_n \ge \ldots
\end{equation}
and the associated eigenfunctions $$(u_1,u_2,\ldots,u_n,\ldots)$$ which we
assume  normalized: $\int_W |u_n|^2 \,d\mu = 1$. 
Note that the kernel of $L$ is reduced to $0$ (so that $\lambda_1>0$ in~\eqref{eq:vp}). Using the fact that
\begin{equation}\label{eq:vecp}
\lambda_1 = \inf_{f \in H^1_{\mu,0}} \frac{ \beta^{-1} \int_W |\nabla f|^2 \, d \mu}{\int_W f^2 \, \,d\mu}
\end{equation}
(where $H^1_{\mu,0}$ denotes the space of functions such that $\int |\nabla f|^2 + f^2 \, \,d\mu < \infty$ which vanish on~$\partial W$)
it follows by a standard argument  (if $u_1$ is a minimizer, then
$|u_1|$ is also a minimizer) that we may always assume that $u_1$ is a signed, say nonnegative function. Using the
Harnack inequality, it is again standard to show $u_1$ does not
cancel on $W$. We therefore have  
$$u_1>0 \text{ on W},$$
while $u_1$ vanishes on~$\partial W$. This in turn shows  that
$\lambda_1$ is non-degenerate (thus $\lambda_2>\lambda_1$
in~\eqref{eq:vp}) and that the function $u_1$ is the only signed
eigenfunction. For these standard arguments, we {\it e.g.}  refer to~\cite[Section 8.12]{gilbarg-trudinger-77}).

We now introduce the probability measure
\begin{equation}\label{eq:nu}
d\nu=\frac{u_1 \,d\mu}{\int_W u_1 \,d\mu }
\end{equation}
 on $W$. It is standard that $\nu$ is indeed a QSD:
\begin{proposition}
\label{prop:eigenvector}
The measure $\nu$ defined by~\eqref{eq:nu} is a QSD, that is,
satisfies~\eqref{eq:QSD}. In addition, $\nu$~is the eigenfunction associated with the eigenvalue $-\lambda_1$ for the
Fokker-Planck operator $L^*$ with homogeneous Dirichlet (also known as absorbing) boundary conditions. More precisely, if we denote by $w=\frac{d\nu}{dx}=u_1 \exp(-\beta V) / ( Z \int_W u_1 d \mu)$ the density of $\nu$ with respect to the Lebesgue measure, we have
\begin{equation}\label{eq:vpL*}
\left\{
\begin{aligned}
L^* w &= - \lambda_1 w \text{ on $W$,}\\
w & = 0 \text{ on $\partial W$.}
\end{aligned}
\right.
\end{equation}
The eigenvalue $-\lambda_1$ is the first eigenvalue of $L^*$, and is non-degenerate. 
\end{proposition}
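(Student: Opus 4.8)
The plan is to prove the two assertions in turn: the spectral identity~\eqref{eq:vpL*} first, since it is essentially a one-line computation, and then the quasi-stationarity~\eqref{eq:QSD}, which is the real content. \emph{Spectral part.} The key is the conjugation between $L$ and its Lebesgue adjoint $L^*$ induced by the weight $e^{-\beta V}$: combining the reversibility identity $\int_W f\, Lg\, d\mu = \int_W g\, Lf\, d\mu$ (valid for smooth $f,g$ vanishing on $\partial W$) with the fact that $L^*$ is the $L^2(dx)$-adjoint of $L$, one gets
\[
L^*\!\left( g\, e^{-\beta V}\right) = e^{-\beta V}\, L g \qquad \text{on } W
\]
for every smooth $g$ (a direct computation from $L=-\nabla V\cdot\nabla+\beta^{-1}\Delta$ and $L^*=\div(\nabla V\,\cdot)+\beta^{-1}\Delta$ works equally well). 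Applying this to $g=u_1$ and using $Lu_1=-\lambda_1 u_1$ yields $L^* w=-\lambda_1 w$ for $w=u_1 e^{-\beta V}/(Z\int_W u_1\,d\mu)$, while $w=0$ on $\partial W$ because $u_1$ vanishes there. Rephrased operator-theoretically, the weighted multiplication $U: f\mapsto f e^{-\beta V}$ is, up to the constant $Z$, a unitary from $L^2_\mu$ onto $L^2(W,e^{\beta V}dx)$ that intertwines $L$ with $L^*$ (that is, $U L U^{-1}=L^*$); hence $L^*$ has exactly the same eigenvalues as $L$ with the same multiplicities, so $-\lambda_1$ is its first eigenvalue and is non-degenerate, by the properties of $\lambda_1$ recalled just before the statement, with eigenspace spanned by $U u_1 \propto w$.

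\emph{Quasi-stationarity.} I would introduce the sub-Markovian semigroup of the process killed at $\partial W$,
\[
\left(P_t^W f\right)(x) = \E\!\left( 1_{t<T_W^x}\, f(X_t^x)\right),
\]
whose generator is $L$ with homogeneous Dirichlet conditions on $\partial W$; this identification, in particular the representation $\left(P_t^W v_0\right)(x)=\E\!\left(1_{T_W^x\ge t}\,v_0(X_t^x)\right)$ of the solution of the parabolic Dirichlet problem, is precisely Proposition~\ref{prop:FK} with $\varphi\equiv 0$. Two facts then combine: since $Lu_1=-\lambda_1 u_1$, the eigenfunction propagates as $P_t^W u_1 = e^{-\lambda_1 t}\,u_1$; and the reversibility of the dynamics restricted to $W$ makes $P_t^W$ self-adjoint on $L^2_\mu$. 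Consequently, for any measurable $A\subset W$,
\[
\int_W \P\!\left(X_t^x\in A,\ t<T_W^x\right) d\nu
= \frac{1}{\int_W u_1\,d\mu}\int_W \left(P_t^W 1_A\right) u_1\, d\mu
= \frac{e^{-\lambda_1 t}}{\int_W u_1\,d\mu}\int_A u_1\, d\mu
= e^{-\lambda_1 t}\,\nu(A).
\]
Taking $A=W$ gives $\int_W \P(t<T_W^x)\,d\nu = e^{-\lambda_1 t}$, and substituting both identities into the right-hand side of~\eqref{eq:QSD} produces exactly $\nu(A)$; hence $\nu$ satisfies~\eqref{eq:QSD} and is a QSD.

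As a byproduct worth recording (it is reused in the analysis of the parallel step), when $X_0\sim\nu$ the escape time $T_W$ is exponentially distributed with parameter $\lambda_1$ — take $A=W$ above — and a short additional argument using the Markov property, namely $\P_\nu(T_W>t,\,X_{T_W}\in B)=\E_\nu[1_{T_W>t}\,\P_{X_t}(X_{T_W}\in B)]=e^{-\lambda_1 t}\int_W\P_{\cdot}(X_{T_W}\in B)\,d\nu$, shows this time is independent of the exit location; these are the very properties [H1]--[H2] target.

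\emph{Main difficulty.} There is no deep obstacle here; the care lies in the functional-analytic bookkeeping. One must (i) invoke elliptic regularity — $V$ smooth, $W$ bounded Lipschitz — to know that $u_1\in C^\infty(W)\cap C(\overline W)$ is bounded, so that $P_t^W u_1=e^{-\lambda_1 t}u_1$ holds pointwise and the probabilistic representation of $P_t^W$ is legitimate; (ii) pass from smooth test functions to the indicator $1_A$ in the self-adjointness relation $\langle P_t^W 1_A, u_1\rangle_\mu = \langle 1_A, P_t^W u_1\rangle_\mu$, which is immediate since $P_t^W$ is bounded and self-adjoint on $L^2_\mu$ and $1_A\in L^2_\mu$ ($\mu|_W$ being finite); and (iii) reconcile the PDE formulation of Proposition~\ref{prop:FK} with the spectral/semigroup formulation used above, i.e. check that the self-adjoint operator furnished by the variational problem~\eqref{eq:vecp}, whose first eigenfunction is $u_1$, is indeed the generator of the killed semigroup (Friedrichs extension, compact resolvent on the bounded domain $W$). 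All of these are classical.
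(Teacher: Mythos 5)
Your proof is correct and rests on the same three ingredients as the paper's: the Feynman--Kac/killed-semigroup representation, reversibility (self-adjointness of $L$ with Dirichlet conditions on $L^2_\mu$), and the eigenvalue relation $Lu_1=-\lambda_1 u_1$. The paper differentiates $\int_W v(t,\cdot)\,d\nu$ in $t$ and solves the resulting scalar ODE, whereas you use the integrated semigroup identity $\langle P_t^W 1_A,u_1\rangle_\mu=\langle 1_A,P_t^Wu_1\rangle_\mu=e^{-\lambda_1 t}\langle 1_A,u_1\rangle_\mu$ directly, and for the $L^*$ part you compute the conjugation $L^*(g\,e^{-\beta V})=e^{-\beta V}Lg$ rather than invoking the variational identity as the paper does; these are cosmetic rather than substantive differences.
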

\begin{proof}
To get~\eqref{eq:QSD}, it is sufficient to prove that for any smooth function $f$ vanishing on~$\partial W$:
\begin{equation}\label{eq:1}
\int_W \E(f(X_t^x) \, 1_{t \le T^x_W}) \, d\nu = \int_W f d \nu \,  \int_W \P(t \le T^x_W) \, d\nu.
\end{equation}
Denote by $v(t,x)=\E(f(X_t^x) \, 1_{t \le T^x_W})$ and $\overline{v}(t,x)=\P(t \le T^x_W)$. It follows from Proposition~\ref{prop:FK} that
$$
\left\{
\begin{aligned}
\partial_t v &= L v \text{ for $t \ge 0$, $x \in W$, }\\
v&= 0 \text{ on $\partial  W$,} \\
v(0,x)&=f(x),
\end{aligned}
\right.
$$
and $\bar v$ satisfies the same equation with initial condition $\bar
v(0,x)=1$. Thus, we get: 
\begin{align*}
\frac{d}{dt}\int \E(f(X_t^x) \, 1_{t \le T^x_W}) \, d\nu 
&=\frac{d}{dt}\int v(t,x) u_1(x) \, \,d\mu \left( \int_W  u_1\, d\mu \right)^{-1} \\
&=\int L v(t,x) u_1(x) \, \,d\mu  \left( \int_W u_1 \, d\mu \right)^{-1}\\
&=\int  v(t,x) L u_1(x) \, \,d\mu \left( \int_W u_1 \, d\mu \right)^{-1}\\
&=- \lambda_1 \int  v(t,x)  u_1(x) \, \,d\mu \left( \int_W u_1 \, d\mu \right)^{-1}\\
&=- \lambda_1 \int \E(f(X_t^x) \, 1_{t \le T^x_W}) \, d\nu.
\end{align*}
This implies,
$$\int \E(f(X_t^x) \, 1_{t \le T^x_W}) \, d\nu=\int f d \nu \exp(-\lambda_1 t)$$
which in turn yields~\eqref{eq:1} similarly arguing on~$\overline{v}$.

The relation between the spectrum of the operator $L$ with Dirichlet boundary conditions on $\partial W$ seen as on operator on $L^2_\mu$, and the operator $L^*$ with absorbing boundary conditions, follows the variational equality satisfied by the functions $u_k$: for all test function $f \in H^1_{\mu,0}$,
$$-\int_W u_k L f  \, d \mu =\beta^{-1} \int_W \nabla u_k \cdot \nabla f \, d \mu = \lambda_k \int_W u_k f \, d \mu,$$
which shows that $(-\lambda_k,u_k \exp(-\beta V))$ is an eigenvalue / eigenfunction couple for $L^*$. The converse is obtained similarly.
\end{proof}

%

\begin{remark}
  It will follow from Proposition~\ref{prop:error} below that there is
actually a \emph{unique} QSD on~$W$. We will indeed prove there the
(actually exponentially fast) long-time convergence to the QSD $\nu$ for the
process $X_t$ conditioned to stay in $W$, irrespective of the initial distribution.
\end{remark}

The main proposition of this section is the following:
\begin{proposition}\label{prop:QSD}
Consider the quasi-stationary distribution $\nu$ associated to the dynamics~\eqref{eq:eds} on $X_t$, and defined by~\eqref{eq:nu}.
Then, if $X_0$ is distributed following $\nu$, the first exit time $T_W$
from $W$ is exponentially distributed and is a random variable independent of the first hitting point on $\partial W$.
\end{proposition}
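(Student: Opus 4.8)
First I would dispatch the exponential law. Taking the test function $f \equiv 1$ in the computation carried out in the proof of Proposition~\ref{prop:eigenvector} already gives
\begin{equation}\label{eq:expsurvival}
\P_\nu(T_W > t) = \exp(-\lambda_1 t) \qquad \text{for all } t \ge 0,
\end{equation}
so that, when $X_0 \sim \nu$, the exit time $T_W$ is exponentially distributed with rate $\lambda_1$. In particular $T_W < \infty$ $\nu$-a.s., hence the first hitting point $X_{T_W}$ on $\partial W$ is well defined; and since $\nu$ is carried by the open set $W$ (its density $w$ is positive on $W$ and vanishes on $\partial W$), path continuity gives $T_W > 0$ $\nu$-a.s., a fact used below.

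For the independence, set $G(t,A) := \P_\nu(T_W > t,\ X_{T_W} \in A)$ for $t\ge 0$ and $A \subset \partial W$ measurable. The plan is to prove the factorization
\begin{equation}\label{eq:Gsemigroup}
G(t+s,A) = \P_\nu(T_W > t)\, G(s,A), \qquad t,s \ge 0,
\end{equation}
and then let $s\to 0$. To establish~\eqref{eq:Gsemigroup} I would condition on ${\mathcal F}_t$: on the event $\{T_W > t\}$ one has $T_W = t + T_W'$ and $X_{T_W} = X'_{T_W'}$, where the primed quantities refer to the trajectory shifted by $t$, so the Markov property yields $G(t+s,A) = \E_\nu\big( 1_{T_W > t}\, \Phi(X_t) \big)$ with $\Phi(y) := \P_y(T_W > s,\ X_{T_W}\in A)$. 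The only real point is next: by the very definition~\eqref{eq:QSD} of the QSD, read with $X_0\sim\nu$, the law of $X_t$ conditioned on $\{T_W > t\}$ is again $\nu$; therefore $\E_\nu\big( 1_{T_W > t}\Phi(X_t) \big) = \P_\nu(T_W > t)\int_W \Phi\, d\nu = \P_\nu(T_W > t)\, G(s,A)$, which is~\eqref{eq:Gsemigroup}. Taking $s = 0$ (and using $T_W > 0$ a.s.) gives $G(t,A) = \P_\nu(T_W > t)\, \P_\nu(X_{T_W}\in A)$; combined with~\eqref{eq:expsurvival} this is exactly the claimed independence of $T_W$ and $X_{T_W}$, since the events $\{T_W > t\}$, $t\ge 0$, generate the $\sigma$-algebra of $T_W$. (Incidentally, $A = \partial W$ in~\eqref{eq:Gsemigroup} recovers the memoryless property of $T_W$, an independent route to~\eqref{eq:expsurvival} once the rate is identified.)

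A purely PDE proof is also available, parallel to that of Proposition~\ref{prop:eigenvector}. For $\varphi$ smooth on $\partial W$, let $h$ be its harmonic extension ($Lh=0$ in $W$, $h=\varphi$ on $\partial W$); since $(h(X_s))_{s\le T_W}$ is a bounded martingale, $r(t,x) := \E_x\big(1_{T_W>t}\,\varphi(X_{T_W})\big) = \E_x\big(1_{T_W>t}\,h(X_t)\big)$, which by Proposition~\ref{prop:FK} solves $\partial_t r = Lr$ in $W$, $r=0$ on $\partial W$, $r(0,\cdot)=h$. As $r(t,\cdot)$ and $u_1$ both vanish on $\partial W$, the same integration by parts as in Proposition~\ref{prop:eigenvector} gives $\frac{d}{dt}\int_W r\,u_1\,d\mu = \int_W r\,Lu_1\,d\mu = -\lambda_1\int_W r\,u_1\,d\mu$, whence $\int_W r(t,\cdot)u_1\,d\mu = \exp(-\lambda_1 t)\int_W h\,u_1\,d\mu$, i.e. $\E_\nu\big(1_{T_W>t}\varphi(X_{T_W})\big) = \exp(-\lambda_1 t)\,\E_\nu\big(\varphi(X_{T_W})\big)$; a density/monotone-class argument in $\varphi$ then yields the factorization. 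I expect the only delicate point in this second route to be the regularity of $h$ and of $r$ up to the merely Lipschitz boundary $\partial W$, needed to apply Proposition~\ref{prop:FK} and to integrate by parts; this is precisely what the first, probabilistic, argument avoids, which is why I would present that one.
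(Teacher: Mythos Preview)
Your proposal is correct, and both routes you outline are sound; neither, however, is the one the paper takes.

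The paper works with the \emph{complementary} quantity $v(t,x)=\E_x\big(1_{T_W<t}\,\varphi(X_{T_W})\big)$, which solves $\partial_t v=Lv$ with the \emph{nonhomogeneous} Dirichlet condition $v=\varphi$ on~$\partial W$ and zero initial data. Integrating $\int_W v\,d\nu$ by parts then produces a boundary term $-\beta^{-1}\int_{\partial W}\varphi\,\partial_n\nu\,d\sigma_{\partial W}$, leading to the ODE $f'=-\lambda_1 f+\lambda_1\int_{\partial W}\varphi\,d\rho$ with $\rho=-(\beta^{-1}/\lambda_1)\,\partial_n\nu\,d\sigma_{\partial W}$; solving it gives $\E^\nu(1_{T_W<t}\varphi(X_{T_W}))=(1-e^{-\lambda_1 t})\int\varphi\,d\rho$ and hence simultaneously the exponential law, the independence, and an explicit formula for the exit distribution~$\rho$.

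Your probabilistic argument is genuinely different and more elementary: it uses only the Markov property and the defining invariance~\eqref{eq:QSD}, never touches $\partial W$ analytically, and therefore sidesteps all regularity issues on the merely Lipschitz boundary. The price is that it does not identify~$\rho$ explicitly. Your PDE variant is closer in spirit to the paper's but is a neat twist: by passing to $r(t,x)=\E_x(1_{T_W>t}h(X_t))$ through the $L$-harmonic extension~$h$ of~$\varphi$, you trade the nonzero boundary data for a nonzero initial condition, so the integration by parts becomes boundary-free---at the cost, as you correctly note, of needing $h$ (and $r$) regular enough up to the Lipschitz boundary, which is exactly the technical point the paper's own computation also leans on when it carries the boundary integrals.
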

\begin{proof}
Consider, for a smooth test function $\varphi: \partial W \to \R$, $v$ solution to:
$$
\left\{
\begin{aligned}
\partial_t v &= L v \text{ for $t \ge 0$, $x \in W$, }\\
v&= \varphi \text{ on $\partial  W$,} \\
v(0,x)&=0.
\end{aligned}
\right.
$$
We know from Proposition~\ref{prop:FK} that, for all $t\ge 0$ and $x \in W$,
$$v(t,x)= \E \left(1_{T_W^x < t}  \, \varphi\left(X_{T_W^x}^x\right) \right).$$
Consider now
$$f(t) = \int_W v(t,x) d\nu = \E^\nu \left(1_{T_W < t}\, \varphi(X_{T_W}) \right),$$
where the superscript $\nu$ indicates that the process $X_t$ we consider
is assumed to start at $t=0$ under
the quasi-stationary distribution~$\nu$. Then, we have :
\begin{align*}
f'(t)
&=\int_W \partial_t v(t,x) d\nu \\
&=\int_W L v(t,x) d\nu \\
&=\int_W \left( -\nabla V \cdot \nabla v + \beta^{-1} \Delta v \right) d\nu\\
&=\int_W \left( v \, \div (\nabla V \nu) - \beta^{-1}  \nabla v \cdot  \nabla \nu \right) - \int_{\partial W} \nabla V  \cdot n \, \nu + \beta^{-1} \nabla v \cdot n \, \nu\\
&= \int_W v \, \left( \div (\nabla V \nu) +  \beta^{-1} \Delta \nu  \right) - \beta^{-1} \int_{\partial W}  v  \nabla \nu \cdot n \\
&= \int_W v L^* \nu - \beta^{-1} \int_{\partial W}  \varphi \,  \nabla \nu \cdot n \\
&= - \lambda_1 f  + \lambda_1 \int_{\partial W}  \varphi \, d \rho.
\end{align*}
where $n$ denotes the outward normal to $W$,
where  $$\rho(dx)=-\frac{\beta^{-1}}{\lambda_1} \nabla \nu \cdot   n \,
d\sigma_{\partial W}$$ $\sigma_{\partial W}$ denotes the Lebesgue
measure on $\partial W$, and where, with a slight abuse of notation, we denote by $\nu$ both the probability measure and its density  with respect to the Lebesgue measure on $W$, which is proportional to $u_1(x) \exp(-\beta V(x))$. This yields
$$ \E^\nu \left(1_{T_W < t} \, \varphi(X_{T_W}) \right) = f(t)=\left( 1 - \exp(-\lambda_1 t) \right) \int \varphi \, d \rho,$$
which concludes the proof, $\rho$ being then the first hitting point distribution on $\partial W$.
\end{proof}

A natural question for practical purposes is whether the fact that the exit time has an exponential law implies that the initial condition is distributed according to the QSD. Here is a necessary and sufficient condition.
\begin{proposition}
Let us assume that $X_t$ is solution to~\eqref{eq:eds}, with an initial condition $X_0$ distributed according to a probability measure $\mu_0$ with support in the well $W$ and such that:
\begin{equation*}
\int_W \left(\frac{d \mu_0}{d \mu} \right)^2 d \mu < \infty.
\end{equation*}
Let us assume that the first exit time from $W$ (denoted by $T_W$) is exponentially distributed.

The initial distribution is necessarily the QSD ($\mu_0=\nu$) if and only if the eigenvalues  of the operator $L$ on $L^2_\mu$ with zero Dirichlet boundary conditions are non-degenerate (see~\eqref{eq:vp}): $\forall i \ne j, \, \lambda_i \ne \lambda_j$ and  $\forall k \ge 2$,
$$\int_W u_k d \mu \neq 0,$$
where $u_k$ denotes the $k$-th eigenfunction (see~\eqref{eq:vecp}). 
\end{proposition}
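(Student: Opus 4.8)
The plan is to compute the law of the exit time $T_W$ under a general admissible initial law $\mu_0$ by expanding it on the Dirichlet eigenbasis $(u_k)_{k\ge1}$ of $L$, and then to read off the conclusion from the uniqueness of Dirichlet series expansions. Set $\phi_0:=\frac{d\mu_0}{d\mu}$, which belongs to $L^2_\mu(W)$ by the standing hypothesis, and put $a_k:=\langle\phi_0,u_k\rangle_\mu=\int_W u_k\,d\mu_0$ and $c_k:=\langle 1,u_k\rangle_\mu=\int_W u_k\,d\mu$. Applying Proposition~\ref{prop:FK} with $\varphi\equiv0$ and $v_0\equiv1$, the survival function $\bar v(t,x)=\P(T_W^x>t)$ solves the Dirichlet heat equation $\partial_t\bar v=L\bar v$ on $W$ with $\bar v(0,\cdot)=1$, hence $\bar v(t,\cdot)=\sum_k c_k e^{-\lambda_k t}u_k$ in $L^2_\mu$. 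Integrating against $\mu_0$ and using Parseval,
$$\P^{\mu_0}(T_W>t)=\langle\bar v(t,\cdot),\phi_0\rangle_\mu=\sum_{k\ge1}c_k\,a_k\,e^{-\lambda_k t},$$
the series converging absolutely and uniformly on $[0,\infty)$ since $\sum_k c_k^2=\|1\|_{L^2_\mu}^2=\mu(W)<\infty$ and $\sum_k a_k^2=\|\phi_0\|_{L^2_\mu}^2<\infty$. Grouping the terms over the distinct eigenvalues $\Lambda_1=\lambda_1<\Lambda_2<\cdots$, with $b_j:=\sum_{k:\lambda_k=\Lambda_j}c_k a_k$, this reads $\P^{\mu_0}(T_W>t)=\sum_j b_j e^{-\Lambda_j t}$.

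Since $T_W$ is assumed exponential, $\P^{\mu_0}(T_W>t)=e^{-\alpha t}$ for some $\alpha>0$. Matching the two absolutely convergent Dirichlet series (multiply their difference by $e^{\Lambda t}$ for the smallest exponent $\Lambda$ carrying a nonzero coefficient and let $t\to\infty$) forces $\alpha$ to coincide with some $\Lambda_j$, with the corresponding coefficient equal to $1$ and all others vanishing. Now $c_1>0$ because $u_1>0$ on $W$, and $a_1=\int_W u_1\,d\mu_0>0$ because $\mu_0$ has a density with respect to $\mu$ while $\mu(\partial W)=0$, so $\mu_0$ charges only the open set $W$ where $u_1>0$; since $-\lambda_1$ is simple (Section~\ref{sec:QSD}), $b_1=c_1a_1>0$. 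Hence necessarily $\alpha=\lambda_1$, $b_1=1$, and $\sum_{k:\lambda_k=\Lambda_j}c_k a_k=0$ for every $j\ge2$ (in particular the exponential rate is forced to be $\lambda_1$).

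For the ``if'' direction, assume all $\lambda_i$ distinct and $c_k\neq0$ for every $k\ge2$: each identity above is then $c_ka_k=0$ with $c_k\neq0$, whence $a_k=0$ for $k\ge2$; combined with $c_1a_1=1$ this gives $\phi_0=u_1/c_1$, i.e. $d\mu_0=\frac{u_1\,d\mu}{\int_W u_1\,d\mu}=d\nu$, so every admissible $\mu_0$ equals $\nu$. For the converse I argue by contraposition: if the conditions fail, either some eigenvalue $\Lambda:=\lambda_i=\lambda_j$ with $i\neq j$ is degenerate (necessarily $\Lambda\ge\lambda_2$), or some $c_{k_0}=0$ with $k_0\ge2$. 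Let $E_\Lambda:=\Vect\{u_k:\lambda_k=\Lambda\}$ and $\ell(\psi):=\sum_{k:\lambda_k=\Lambda}c_k\langle\psi,u_k\rangle_\mu=\langle\psi,1\rangle_\mu$ (the last equality using $1=\sum_k c_k u_k$ and $\psi\in E_\Lambda$); in the first case $\dim E_\Lambda\ge2$, so $\ell$ vanishes on some $\psi\in E_\Lambda\setminus\{0\}$, and in the second $\ell(u_{k_0})=c_{k_0}=0$. Taking such a $\psi\neq0$ and setting $\phi_0:=u_1/c_1+\varepsilon\psi$, one checks $\int_W\phi_0\,d\mu=1+\varepsilon\ell(\psi)=1$ and $\P^{\mu_0}(T_W>t)=e^{-\lambda_1 t}+\varepsilon\ell(\psi)e^{-\Lambda t}=e^{-\lambda_1 t}$: the exit time is exponential while $\mu_0\neq\nu$.

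The one genuinely delicate point is to choose $\varepsilon\neq0$ small enough that $\phi_0\ge0$, so that $\mu_0$ is a bona fide probability measure (its density, a finite combination of eigenfunctions, automatically belongs to $L^2_\mu$). Writing $\phi_0=\frac{u_1}{c_1}\bigl(1+\varepsilon c_1\,\psi/u_1\bigr)$, this reduces to $\psi/u_1\in L^\infty(W)$. Away from $\partial W$ this is clear since $u_1>0$ there and both functions are smooth; near $\partial W$ both $u_1$ and $\psi$ vanish and one needs a comparison $|\psi|\le Cu_1$, which follows from Hopf's lemma ($-\partial_n u_1>0$ on $\partial W$) together with the Lipschitz bound $|\psi|\lesssim\mathrm{dist}(\cdot,\partial W)$ when $\partial W$ is $C^2$, and from the boundary Harnack principle when $W$ is merely Lipschitz. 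I expect this boundary comparison to be the main (technical) obstacle; everything else is a routine consequence of the $L^2_\mu$ spectral decomposition of the Dirichlet realisation of $L$ and of the elementary uniqueness of Dirichlet series.
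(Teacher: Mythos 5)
Your proof is correct and follows essentially the same route as the paper's: spectral expansion of the survival probability $\P^{\mu_0}(T_W>t)$ on the Dirichlet eigenbasis, term-by-term identification of the resulting Dirichlet series with $e^{-\lambda t}$, and, for the converse, an explicit perturbation of $\nu$ either along a mean-zero eigenfunction or inside a degenerate eigenspace. You are in fact slightly more careful than the paper at two points: you group terms by distinct eigenvalues before matching coefficients (the paper's relation~\eqref{eq:kge2} only holds eigenspace-wise when eigenvalues coincide, which is precisely why degeneracy breaks the implication), and you justify the nonnegativity of the perturbed density near $\partial W$ via Hopf's lemma or the boundary Harnack principle, where the paper merely appeals to the regularity of the eigenfunctions on $\overline{W}$.
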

\begin{proof}
The proof is divided into three steps.

\medskip
\noindent{\em Step 1: A rewriting of $\P(T_W \ge t)$ in terms of the eigenvalues and eigenfunctions of $L$.}

The assumption on $T_W$ is equivalent to the fact that there exists a positive $\lambda$ such that, for all time $ t\ge 0$,
$$\P(T_W \ge t) =\exp(- \lambda t).$$
Using the same reasoning as in the proof of Proposition~\ref{prop:QSD}, the left hand-side can be rewritten as: for all time $t \ge 0$
\begin{equation}\label{eq:2}
\P(T_W \ge t) = \int v(t,x) \mu_0(dx),
\end{equation}
where $v$ satisfies the partial differential equation:
$$
\left\{
\begin{aligned}
\partial_t v &= L v \text{ for $t \ge 0$, $x \in W$, }\\
v&= 0 \text{ on $\partial  W$,} \\
v(0,x)&=1.
\end{aligned}
\right.
$$
Using the spectral decomposition of the operator $L$ with homogeneous Dirichlet boundary conditions on $\partial W$, we have:
$$v(t,x)=\sum_{k \ge 1} \exp(-\lambda_k t) \left( \int_W u_k d \mu \right) u_k(x).$$
This equality holds for example in the functional space ${\mathcal C}(\R_+,L^2_\mu)$. Using this decomposition in~\eqref{eq:2}, one gets
\begin{equation}\label{eq:3}
\exp(- \lambda t)=\P(T_W \ge t) = \sum_{k \ge 1} \exp(-\lambda_k t) \left( \int_W u_k d \mu \right) \left( \int_W u_k d\mu_0 \right),
\end{equation}
which holds for all time $t \ge 0$. Notice that the convergence of the series is normal, for the $L^\infty$-norm on $t$, since by Cauchy Schwarz,
$$\sum_{k \ge 1} \left| \int_W u_k d \mu \right| \left| \int_W u_k d\mu_0 \right| < \infty.$$

\medskip
\noindent{\em Step 2: One implication.}

Let us assume that the eigenvalues of the operator $L$ are non-degenerate and that $\forall k \ge 2, \int_W u_k d \mu \neq 0$.
Thus, in the limit $t \to \infty$, the right hand-side of~\eqref{eq:3} is equivalent to the first term of the series (since $\lambda_1$ is non-degenerate):
$$\sum_{k \ge 1} \exp(-\lambda_k t) \left( \int_W u_k d \mu \right) \left( \int_W u_k d\mu_0 \right) \sim \exp(-\lambda_1 t) \left( \int_W u_1 d \mu \right) \left( \int_W u_1 d\mu_0 \right).$$
Using now~\eqref{eq:3}, this implies that
\begin{equation}\label{eq:k=1}
\lambda_1=\lambda \text{ and }  \left( \int_W u_1 d \mu \right) \left( \int_W u_1 d\mu_0 \right)=1.
\end{equation}
Subtracting $\exp(-\lambda t)$ from both sides of~\eqref{eq:3}, and repeating the argument, one gets that for all $k \ge 2$,
\begin{equation}\label{eq:kge2}
\left( \int_W u_k d \mu \right) \left( \int_W u_k d\mu_0 \right)=0.
\end{equation}
which implies that: $\forall k \ge 2$, 
$$\int_W u_k d \mu_0 = 0.$$
Thus, $\frac{d\mu_0}{d\mu}$ only has a component along the first eigenfunction $u_1$, which implies (using~\eqref{eq:k=1}):
$$d\mu_0=\frac{d\mu_0}{d\mu} d\mu = \frac{u_1 d\mu}{\int_W u_1 d\mu} = d\nu.$$
The initial condition is necessarily the QSD.

\medskip
\noindent{\em Step 3: The other implication.}

Conversely, let us assume that: $\exists k_0 \ge 2$,
$$\int_W u_{k_0} d \mu = 0.$$
Let us then consider the measure $\mu_0$ defined by
$$d\mu_0 = \left( \frac{u_1}{\int_W u_1 d\mu} + \varepsilon u_{k_0}  \right) d \mu.$$
Clearly, $\int_W d\mu_0=1$ and $\mu_0$ is a non-negative measure for $\varepsilon>0$ sufficiently small (using the regularity of the eigenfunctions on $\overline{W}$). Thus, $\mu_0$ is a probability measure which is such that~\eqref{eq:3} is satisfied,
and thus an initial condition for~\eqref{eq:eds} which is different from the QSD, but such that the exit time is exponentially distributed.

Likewise, let us assume that one eigenvalue is degenerate: $\exists k_0 \ge 2$, $$\lambda_{k_0}=\lambda_{k_0+1}.$$
Let us then consider the measure $\mu_0$ defined by
$$d\mu_0 = \left( \frac{u_1}{\int_W u_1 d\mu} + \varepsilon \left( \left( \int_W u_{k_0+1} \, d\mu \right) u_{k_0} - \left( \int_W u_{k_0} \, d\mu \right) u_{k_0+1} \right) \right) d \mu.$$
Again, $\int_W d\mu_0=1$ and $\mu_0$ is a non-negative measure for $\varepsilon>0$ sufficiently small. Thus, $\mu_0$ is a probability measure which is such that~\eqref{eq:3} is satisfied, and thus an initial condition for~\eqref{eq:eds} which is different from the QSD, but such that the exit time is exponentially distributed.
\end{proof}

A simple example of a situation where there exists a $k_0 \ge 2$ such that $\int_W u_{k_0} d \mu = 0$ is the following: $W=(0,1)^d$ and $V=0$ on $W$ (so that $\mu$ is simply the Lebesgue measure on $W$). Some eigenfunctions of the Dirichlet laplacian operator on $(0,1)^d$ indeed have zero mean. In dimension $d=1$, one can for example consider the probability measure $\mu_0$ with density proportional to $\frac{\sin(\pi x)}{\int_0^1 \sin(\pi x) \, dx} + \varepsilon \sin(2 \pi x)$ (which is different from the QSD $\frac{\sin(\pi x)}{\int_0^1 \sin(\pi x) \, dx} \, dx$) to obtain exponentially distributed exit times from $(0,1)$.

\subsection{Formalization of the dephasing step}

As stated in Proposition~\ref{prop:QSD}, the crucial property of the QSD
$\nu$ is that if the process starts under~$\nu$, then the exit time from
$W$ is exponentially distributed, and the hitting point on $\partial W$
is independent from the exit time. The ideal dephasing step would
therefore ensure that the replicas are independent and all share the QSD
as  law. Then, [H1] and [H2] would be fulfilled and the parallel step would be {\em exact}, as made precise below in Proposition~\ref{prop:par_step}.

The actual  dephasing step, as implemented in the current version of the
algorithm, can thus be interpreted as an approximation procedure for the
QSD of the well. It is consequently interesting to point out that
basically two techniques are known in the literature to sample the QSD $\nu$.
One method (called the Fleming-Viot method~\cite{burdzy-holyst-march-00,grigorescu-kang-04,ferrari-maric-07,lobus-08}) consists in launching
a set of replicas in $W$, and when one of them leaves the well, to
duplicate one of the other replicas. Then, one lets the number of
replicas and the time go to infinity. In this limit, a finite fixed subset of replicas is i.i.d. with law the QSD. This method is very close to what is
performed during the dephasing step in the original version of the algorithm presented in the introduction. The only slight modification is that the Fleming-Viot method consists in duplicating a replica when one leaves the well, rather than starting again the whole procedure from a fixed initial position.

Another approach consists in considering only one walker in the well,
and each time this walker leaves the well, redistribute it according
to the empirical measure within the well up to the exit time. Again, one
has to consider the distribution in the long-time limit to get the
QSD, see~\cite{aldous-flannery-palacios-88}. This somehow justifies the intuition used in the decorrelation step that, if the process remains for a very long time in a well, it will be distributed according to the QSD, see Section~\ref{sec:decorrelation} below.

\medskip

In summary, it is reasonable, with a view to globally analyze the parallel
replica dynamics, to first replace  the dephasing step by an
\emph{ideal} dephasing step, which consists in instantaneously drawing
$N$ initial positions for the replicas, independently and  according to the QSD.
The issue of generating that particular distribution, either using a
dedicated approach, or precisely using the dephasing step (as currently implemented), is a separate
issue from analyzing the error introduced by the parallel replica dynamics.

\section{Analysis of the parallel step}
\label{sec:justif_par}

Our analysis of the parallel step is formalized in the following proposition, which shows that the parallel step does not introduce any additional error if the assumptions [H1] and [H2] are satisfied.
\begin{proposition}\label{prop:par_step}
Consider $N$ i.i.d. stochastic process  $X_t^k$, their escape times 
$$T^k_W= \inf \{t > 0, X_t^k \not \in W \}$$
from a bounded domain $W$, and the \emph{first} escape time over all processes
$$T=T^{K_0}_W \text{ where } K_0 = \arg \min_{k \in \{1,\ldots,N\}} T^k_W.$$
\begin{itemize}
\item Assume that
$$\text{$T^1_W$ is  exponentially distributed}.$$
Then
$$\text{$NT$ has the same law as $T^1_W$.}$$
\item Assume that
$$\text{$T^1_W$ is independent of the first hitting point on $\partial W$}.$$
Then the first hitting point for $X_t^{K_0}$ on $\partial W$ has the same distribution as  the first hitting point for $X_t^1$ and is independent of $T^{K_0}_W$.
\end{itemize}
\end{proposition}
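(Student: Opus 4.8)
The plan is to dispatch the two assertions separately: the first is a one-line computation about minima of exponentials, while the second is a careful bookkeeping of the independence structure among the replicas.

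\medskip
\noindent\emph{First point.} Since the $X_t^k$ are i.i.d., the escape times $T^k_W$ are i.i.d.\ as well, and by assumption each is exponential, say with parameter $\lambda>0$. Then $T=\min_{1\le k\le N}T^k_W$ satisfies $\P(T\ge t)=\prod_{k=1}^N\P(T^k_W\ge t)=\exp(-N\lambda t)$, so $T$ is exponential with parameter $N\lambda$, and hence $NT$ is exponential with parameter $\lambda$, i.e.\ has the same law as $T^1_W$.

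\medskip
\noindent\emph{Second point.} I would write $Y^k:=X^k_{T^k_W}\in\partial W$ for the first hitting point of the $k$-th replica, so that the first hitting point of $X^{K_0}$ is $Y^{K_0}$. The key observation is that, for each $k$, the variable $Y^k$ is independent of the whole vector $(T^1_W,\dots,T^N_W)$: indeed $Y^k$ is a functional of the $k$-th replica alone, hence independent of $T^j_W$ for $j\ne k$ by [H1], and independent of $T^k_W$ by the standing hypothesis (which, being stated for $T^1_W$, holds for every $k$ by the i.i.d.\ assumption). Since both the index $K_0=\arg\min_k T^k_W$ — well defined because simultaneous escapes occur with probability zero — and the value $T=T^{K_0}_W$ are measurable functions of $(T^1_W,\dots,T^N_W)$, each $Y^k$ is in particular independent of the pair $\big(1_{K_0=k},\,T\big)$. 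Then, for bounded measurable $f$ on $\partial W$ and $g$ on $\R_+$, using the a.s.\ partition $\{K_0=1\},\dots,\{K_0=N\}$,
\[
\E[f(Y^{K_0})\,g(T)]=\sum_{k=1}^N\E[f(Y^k)\,g(T)\,1_{K_0=k}]=\sum_{k=1}^N\E[f(Y^k)]\,\E[g(T)\,1_{K_0=k}],
\]
where the last equality uses the independence just established. By [H1], $\E[f(Y^k)]=\E[f(Y^1)]$ is independent of $k$, so the sum factorizes as $\E[f(Y^1)]\sum_k\E[g(T)\,1_{K_0=k}]=\E[f(Y^1)]\,\E[g(T)]$. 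Taking $g\equiv1$ yields $\E[f(Y^{K_0})]=\E[f(Y^1)]$ for all $f$, i.e.\ $Y^{K_0}$ has the same law as $Y^1$; substituting this back gives $\E[f(Y^{K_0})\,g(T)]=\E[f(Y^{K_0})]\,\E[g(T)]$ for all $f,g$, which is exactly the independence of $Y^{K_0}$ and $T=T^{K_0}_W$.

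\medskip
The step I expect to be the only real subtlety is the independence of $Y^k$ from the \emph{full} escape-time vector: it is tempting to use only the hypothesis that $T^1_W$ is independent of its own hitting point and to overlook that independence \emph{across} replicas is also needed, which is precisely what [H1] provides. One should also make explicit that the argmin is a.s.\ unique, so that the indicators $1_{K_0=k}$ are well defined and partition the probability space; at the level of this proposition this is part of the standing assumptions (for the diffusion it follows from the continuity of the law of $T^k_W$).
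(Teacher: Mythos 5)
Your proof is correct. The first point is identical to the paper's argument. For the second point you use the same key decomposition over the events $\{K_0=k\}$ as the paper, but you then finish differently: instead of integrating out the other replicas to get the factor $[\varphi(T^k_W)]^{N-1}$ and computing $N\int_t^\infty [\varphi(s)]^{N-1}(-\varphi')(s)\,ds=[\varphi(t)]^N$ explicitly, you observe that each $Y^k$ is independent of the entire vector $(T^1_W,\dots,T^N_W)$ --- hence of $(1_{K_0=k},T)$ --- and factorize abstractly. This is a legitimate and arguably cleaner variant: it avoids assuming $\varphi$ is differentiable (which the paper's computation implicitly does) and makes transparent exactly where the two independence hypotheses (across replicas, and within a replica between exit time and hitting point) enter; the paper's version, in exchange, exhibits the joint law $\E\left[f(Y^{K_0})1_{T>t}\right]=\left(\int_{\partial W} f\,d\rho\right)[\varphi(t)]^N$ in closed form. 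Your closing remark about the a.s.\ uniqueness of the argmin correctly matches the paper's standing assumption.
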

\begin{proof}
The first statement is standard. If $T^1_W$ is exponentially distributed, then
$$\varphi(t) = \P( T^1_W > t) = \exp(-\lambda t),$$
where $\lambda$ is the parameter of the exponential distribution of
$T^1_W$. Thus,considering $T=\min_{k \in \{1,\ldots,N\}} T^k_W$, we have
\begin{align*}
\P(T > t)
&= \P\left(\min_{k \in \{1,\ldots,N\}} T^k_W > t\right) \\
&= \P\left(\forall k \in \{1,\ldots,N\}, \, T^k_W > t\right) \\
&= \prod_{k=1}^N \varphi(t) \\
&= \exp(- N \lambda t).
\end{align*}
This shows that $T$ is exponentially distributed, with parameter $N
\lambda$. Consequently, $NT$ is exponentially distributed with parameter $\lambda$.

For the second assertion, the assumption can be written as: for all test functions~$f: \partial W \to \R$,
$$\E \left(f \left(X_{T^1_W}^1\right) 1_{T^1_W > t}\right)= \left( \int_{\partial W} f d \rho \right) \varphi(t),$$
where $\varphi(t)=\P(T^1_W > t)$ and  $\rho$ is the first hitting point distribution, with support on $\partial W$. Then, we have
\begin{align*}
\E \left(f \left(X_{T^{K_0}_W}^{K_0}\right) 1_{T^{K_0}_W > t} \right)
&= \sum_{k=1}^N \E \left(f \left(X_{T^{K_0}_W}^{K_0}\right) 1_{T^{K_0}_W > t} 1_{K_0=k}\right) \\
&= \sum_{k=1}^N \E\left(f \left(X_{T^{k}_W}^{k}\right) 1_{T^{k}_W > t} \prod_{l \neq k}1_{T^l_W > T^k_W} \right) \\
&= \sum_{k=1}^N \E\left(f \left(X_{T^{k}_W}^{k}\right) 1_{T^{k}_W > t} \, [\varphi(T^k_W)]^{N-1} \right) \\
&= N \left( \int_{\partial W} f d\rho \right) \int_t^\infty [\varphi(s)]^{N-1} (-\varphi')(s) ds \\
&= \left( \int_{\partial W} f d\rho \right) [\varphi(t)]^{N}.
\end{align*}
This shows that the first hitting point on $\partial W$ for $X^{K_0}_t$ is distributed according to $\rho$, and is independent of~$T^K_{W_0}$.
\end{proof}

Three remarks are in order.

\medskip

First, in the first assertion of Proposition~\ref{prop:par_step}, the fact that $NT$ has the same law as $T^1_W$ is
actually \emph{equivalent} to $T^1_W$ being exponentially distributed. The
former assertion indeed implies the functional equation: $\forall t >0$ and $\forall N \in \N$
$$[\varphi(t/N)]^N = \varphi(t),$$
where $\varphi(t) = \P ( T^1_W > t )$, the only solution to which is
the exponential function.

Second, without the assumption made in the second assertion, the first
  hitting point on $\partial W$ for $X^{K_0}_t$ cannot generically have the same
  distribution as for $X^1_t$. Indeed, if the first hitting point on
  $\partial W$ and the exit time from $W$ are coupled, the distribution
  of $X^{K_0}_{T^{K_0}_W}$ would favor points on the boundary attained
  in shorter times, compared to the distribution of
  $X^{1}_{T^{1}_W}$. This issue is a
  separate issue from that of having or not an exponential distribution for the exit time. The fact that the first hitting point distribution on $\partial W$ for $X_t^{K_0}$ is the same as for $X^1_t$ implies that the next visited state is the same for the two processes.

Finally and as shown by A.F.~Voter in the original article~\cite{voter-98}, the
algorithm does not require synchronized processors to be used in practice, as
would suggest the schematic presentation of the parallel step we give
above. The parallel step above indeed assumes that the processors as
synchronized, since $T$ and $K_0$ are defined in terms of the first
replica which leaves the well, considering the same physical time unit
for all replicas. If the processors are not synchronized, the parallel step is
still exact by considering the first observed replica which leaves the
well, and by advancing the  simulation time by the sum of the physical
times elapsed on each processor, instead of $N T^{K_0}_W$. We now justify this.

Assume that, for $n \in \{2,\ldots,N\}$, the $n$-th processor is
$\rho_n$ times as  fast as  the first one. Then, $\tau^i_W$ (which is
the time needed for the $i$-th replica, run on the $i$-th processor, to leave
the well $W$) is exponentially distributed with parameter $\rho_i
\lambda$. Then,  consider $\tau= \min_{1 \le n \le N} \tau^i_W$
the random time associated to the first detected event. One can check
that $(1+\rho_2+\ldots+\rho_n) \tau$ has the same law as
$\tau^1_W$. This means that advancing the simulation time by the sum of
the (physical times) counted on each processor at the end of the parallel step
is a correct approach.

This reasoning generalizes to non-constant in time processor
speeds. Assume for simplicity that we have $N=2$ processors, and that
the speed of the second processor, compared to the first, is $\rho_2(t)$
(where $\rho_2$ is deterministic and with values in $(0,+\infty)$, and
$t$ is in the time-unit of the first processor). $\tau^1_W$ is
exponentially distributed with parameter $\lambda$: $\P(\tau^1_W \ge
t)=\exp(-\lambda t)$. 
The time $t$ is measured in time-unit of the first processor. So, when
the time is $t$ on the first processor, the time is $R_2(t)=\int_0^t
\rho_2(s)\,ds$ on the second processor. Thus, in the time-unit of the
first processor, $\tau^2_W$ (which is the first time, in the time-unit
of the first processor, an event occur on the second processor)  is the
image by $R_2$ of an exponential law with parameter $\lambda$ :
$\P(\tau^2_W \ge t)=\exp(-\lambda R_2(t))$. Thus $\tau^2_W$ is not
exponentially distributed anymore. 
Consider however $\tau=\min(\tau^1_W,\tau^2_W)$. We have $\P(\tau \le t) = \P(\tau^1_W \le t) \P(\tau^2_W \le t) = \exp(-\lambda(t+R_2(t)))$, so that $\tau$ has density $\lambda(1+\rho_2(t))  \exp(-\lambda(t+R_2(t)))$. When an event occurs, one looks at the sum of the time actually spent on each processor, which is $\tau+R_2(\tau)$. And the law of $\tau+R_2(\tau)$ is indeed exponential with parameter $\lambda$ since $\E(f(\tau+R_2(\tau))) = \int f( u + R_2(u) ) \lambda(1+\rho_2(u)) \exp(-\lambda(u+R_2(u)) \, du = \int f(z) \lambda \exp(-\lambda z) \, dz$.

\section{Analysis of the decorrelation step}\label{sec:decorrelation}

The dephasing step has now been formally replaced by independent draws
according to the QSD, and  we have formalized the parallel step. It now remains to analyze  the
error introduced,  at the end of the decorrelation
step, by the replacement of $X^{ref}_t$ by a random position
distributed according to the QSD. Intuitively, it is expected that this instantaneous draw
could at least be justified if there exists a  timescale separation: when the
process enters a new well, and if this new well is indeed a metastable
region for the dynamics, then the process remains in the well sufficiently long to
reach the quasi-stationary distribution of that well, before hopping to another
well. It is the purpose of the
decorrelation step to check that the process indeed remains in the well
for a sufficiently long time. For the decorrelation step to be
successful, we thus need the actual typical time to reach the QSD to be much smaller than
the typical time to hop to another well. In this picture, $\tau_{corr}$
is seen as an approximation of  the time to reach the QSD. The purpose
of this section is to justify this rigorously.

The decorrelation step is essentially a step that may be seen
as a way to control the error associated to the instantaneous redrawing
according to the QSD in the new state. This redrawing is only considered
legitimate if the decorrelation step has been successful, that is, the
process has spent a sufficiently long time in the current well. Any
method that provides a control of this error would be an equally interesting "decorrelation step."





We again consider~$X_t$ solution to~\eqref{eq:eds} with initial condition $X_0 \in
W$, where $W$ (the well) is a bounded domain, subset of the state
space. We denote by $\mu_0$ the (arbitrary) distribution of $X_0$. 
We consider the process in the current well, and  the
joint distribution of the first hitting point on the boundary of the
well and the first exit time
$$T_W=\inf\{ t \ge 0, X_t \not \in W \},$$
when this point is hit.
We first derive from  the Markov character of~$(X_t)_{t \ge 0}$ a
useful formula:
\begin{lemma}\label{lem:markov}
We have, for all (deterministic) times $t$ and for all test functions
$f: \R_+ \times W \to \R$, 
$$\E(f(T_W-t,X_{T_W}) | {T_W} \ge t) = \int_W \E(f(T_W^x,X^x_{T_W^x})) \, {\mathcal L}(X_{t} |  {T_W}  \ge t) (dx),$$
where $X^x_t$ and $T^x_W$ respectively denote the process solution to~\eqref{eq:eds} with initial condition $x$, and its associated first exit time from $W$. In the right-hand side, ${\mathcal L}(X_{t} |  {T_W}  \ge t)(dx)$ denotes the distribution of $X_t$ conditionally on $T_W \ge t$. Otherwise stated,
$$\E(f({T_W}-t,X_{T_W}) | {T_W} \ge t) = \E (F(X_{t}) |   {T_W} \ge t)$$
where 
\begin{equation}\label{eq:F}
F(x)=\E(f(T_W^x,X^x_{T_W^x})).
\end{equation}
\end{lemma}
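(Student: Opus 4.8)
The statement is a disguised form of the Markov property. The plan is to fix a deterministic time $t$ and condition on the event $\{T_W \ge t\}$, which has positive probability (since $W$ is bounded and the dynamics is nice, $\P^{\mu_0}(T_W\ge t)>0$ for the relevant range of $t$; when it vanishes the identity is vacuous). On this event, the trajectory $(X_t, X_{t+1}, \dots)$ depends on the future Brownian increments only, which are independent of $\mathcal{F}_t$. First I would write, for a bounded measurable test function $f:\R_+\times W\to\R$,
\begin{equation*}
\E\bigl(f(T_W-t, X_{T_W})\, 1_{T_W\ge t}\bigr) = \E\bigl(1_{T_W\ge t}\,\E\bigl(f(T_W-t,X_{T_W})\mid \mathcal{F}_t\bigr)\bigr),
\end{equation*}
and the point is that on $\{T_W\ge t\}$ the inner conditional expectation equals $F(X_t)$ with $F$ as in~\eqref{eq:F}.

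The key step is the strong Markov property applied at the deterministic time $t$. On the event $\{T_W\ge t\}$, the process has not yet left $W$ by time $t$, so the first exit time of the whole trajectory from $W$ equals $t$ plus the first exit time of the \emph{shifted} process $(X_{t+s})_{s\ge0}$, and the exit location $X_{T_W}$ is the exit location of that shifted process. The shifted process, conditionally on $\mathcal{F}_t$, is a solution of~\eqref{eq:eds} started from $X_t$ driven by the independent Brownian motion $(W_{t+s}-W_t)_{s\ge0}$; hence for any starting point $x\in W$,
\begin{equation*}
\E\bigl(f(T_W-t,X_{T_W})\mid \mathcal{F}_t\bigr)\big|_{X_t=x} = \E\bigl(f(T_W^x, X^x_{T_W^x})\bigr) = F(x),
\end{equation*}
which is exactly the claim that the conditional expectation equals $F(X_t)$ on $\{T_W\ge t\}$. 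Substituting back and dividing by $\P(T_W\ge t)$ gives $\E(f(T_W-t,X_{T_W})\mid T_W\ge t) = \E(F(X_t)\mid T_W\ge t)$, and unfolding the definition of the conditional law $\mathcal{L}(X_t\mid T_W\ge t)$ yields the integral form stated in the lemma.

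The main obstacle — really the only subtlety — is the careful handling of the event $\{T_W\ge t\}$: it is $\mathcal{F}_t$-measurable (it depends only on the trajectory up to time $t$), which is what licenses pulling the indicator inside and using it to localize the conditional expectation; one must also check that, on this event, $T_W^x$ evaluated along the shifted process with $x=X_t$ genuinely reproduces the remaining lifetime $T_W-t$, i.e. that the shifted process exits $W$ at exactly the same instant and location. This is immediate from the pathwise definition of $T_W$ as a first hitting time of $\R^d\setminus W$ together with the flow property of the SDE~\eqref{eq:eds}. One should also note a minor point about the boundary: since $W$ is an open bounded Lipschitz domain and the dynamics is non-degenerate, $X_{T_W}\in\partial W$ a.s. and $T_W<\infty$ a.s., so the test function $f$ is evaluated at a genuine point of $\partial W$ and all the expectations above are finite for bounded $f$. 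Everything else is a routine application of the tower property and Fubini's theorem.
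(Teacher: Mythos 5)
Your argument is correct and is essentially the paper's own proof: reduce to the unconditioned identity $\E(f(T_W-t,X_{T_W})1_{T_W\ge t})=\E(F(X_t)1_{T_W\ge t})$, pull the $\mathcal{F}_t$-measurable indicator out of the conditional expectation, and apply the Markov property at the deterministic time $t$ to identify the inner conditional expectation with $F(X_t)$ on $\{T_W\ge t\}$. Your additional remarks (flow property of the SDE, measurability of $\{T_W\ge t\}$, finiteness of $T_W$) only flesh out details the paper leaves implicit.
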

\begin{proof}
This is equivalent to prove that
$$\E(f({T_W}-t,X_{T_W}) 1_{{T_W} \ge t}) = \E (F(X_{t}) 1_{{T_W} \ge t} ).$$
The result is then obtained by conditioning by ${\mathcal F}_{t}$ (where ${\mathcal F}_t$ is again the filtration generated by the Brownian motions used in the simulation up to time $t$) and using the Markov property:
\begin{align*}
\E(f({T_W}-t,X_{T_W}) 1_{{T_W} \ge t})
&=\E[\E (f({T_W}-t,X_{T_W}) 1_{{T_W} \ge t} |  {\mathcal F}_{t}) ]\\
&=\E[\E (f({T_W}-t,X_{T_W})  |  {\mathcal F}_{t}) 1_{{T_W} \ge t} ]
\end{align*}
and $\E (f({T_W}-t,X_{T_W})  |  {\mathcal F}_{t}) = F (X_{t})$.
\end{proof}

\medskip

Our purpose is now to estimate is the difference in law between the
following two processes: the original process $X_t$ considered above
(starting from the arbitrary initial condition $X_0$),
given that it has spent a sufficiently long time (say $t$) in the
current well, and a similar process starting from the QSD $\nu$ defined
in~\eqref{eq:nu} as initial
distribution. We wish to estimate this
difference in  the limit $t \to \infty$ (which will then, in practice,  be replaced  by $t > \tau_{corr}$).

\medskip

We introduce the error
\begin{equation}
  \label{eq:error}
  e(t)=\left| \E(f({T_W}-t,X_{T_W}) | {T_W} \ge t) - \E^\nu(f({T_W},X_{T_W})) \right|
\end{equation}
where we recall that the superscript $^\nu$ means that, in the second
term (only!), the process  $X_t$ starts at
time $0$ under the quasi-stationary distribution $\nu$ introduced in~\eqref{eq:nu}. 

\medskip

Before we state our main result on this error, we notice that, with $F$
defined by~\eqref{eq:F}, we have
$$\E(F(X_{t}) | {T_W} \ge t) = \frac{\displaystyle\int_W v(t,x) \,d\mu_0}{\displaystyle\int_W \bar v(t,x) \,d\mu_0}$$
since, we recall, $\mu_0$ denotes the law of $X_0$,
$$v(t,x) = \E \left(1_{T_W^x \ge t} \, F(X^x_t) \right)$$
and
$$\bar v(t,x) = \E(1_{T_W^x \ge t}) = \P(T_W^x \ge t).$$

By denoting again $L$ the infinitesimal generator of $(X_t)_{t \ge 0}$, we know from Proposition~\ref{prop:FK} that
$$
\left\{
\begin{aligned}
\partial_t v &= L v \text{ for $t \ge 0$, $x \in W$, }\\
v&= 0 \text{ on $\partial  W$,} \\
v(0,x)&=F(x),
\end{aligned}
\right.
$$
and $\bar v$ satisfies the same equation with initial condition $\bar v(0,x)=1$.

From the spectral decomposition of the operator $L$ introduced in
Section~\ref{sec:QSD}, we therefore get the following expressions for $v$ and $\bar v$:
$$v(t,x) = \sum_{k \ge 1} \exp(-\lambda_k t) \left( \int_W F u_k \, d\mu \right) \, u_k(x)$$
and
$$\bar v(t,x) = \sum_{k \ge 1} \exp(-\lambda_k t) \left( \int_W u_k \, d\mu \right)  u_k(x).$$
Thus, using the definition~\eqref{eq:nu} of the QSD $\nu$, we have 
\begin{align}
\label{eq:pif}
\E(F(X_{t}) | {T_W} \ge t)
& = \frac{\displaystyle\int_W v(t,x) \,d\mu_0}{\displaystyle\int_W \bar v(t,x) \,d\mu_0} \nonumber\\
& = \frac{\displaystyle  \sum_{k \ge 1} \exp(-\lambda_k t) \int_W F u_k \, d\mu \int_W u_k \,d\mu_0 }{\displaystyle  \sum_{k \ge 1} \exp(-\lambda_k t) \int_W u_k \, d\mu \int_W u_k \,d\mu_0}\nonumber \\
&=  \frac{\displaystyle \int_W u_1\, d\mu  \int_W F d\nu  \int_W u_1 \,d\mu_0  + \sum_{k \ge 2} \exp(-(\lambda_k-\lambda_1) t) \int_W F u_k \, d\mu \int_W u_k \,d\mu_0 }{\displaystyle  \int_W u_1 \, d\mu   \int_W u_1 \,d\mu_0+  \sum_{k \ge 2} \exp(-(\lambda_k-\lambda_1) t) \int_W u_k \, d\mu \int_W u_k \,d\mu_0 }\nonumber\\
&=  \frac{\displaystyle   \int_W F d\nu   + \sum_{k \ge 2} \exp(-(\lambda_k-\lambda_1) t) \frac{\int_W F u_k \,d\mu}{\int_W u_1 \, d\mu} \frac{\int_W u_k \,d\mu_0}{ \int_W u_1 \,d\mu_0} }{\displaystyle  1 +  \sum_{k \ge 2} \exp(-(\lambda_k-\lambda_1) t) \frac{\int_W u_k \, d\mu}{\int_W u_1 \, d\mu} \frac{\int_W u_k \,d\mu_0}{ \int_W u_1 \,d\mu_0} }.
\end{align}
Since $u_1 >0$,  we note $\int_W u_1 d \mu_0 > 0$ and $\int_W u_1\,d\mu> 0$.

\medskip

We are now in position to state the main result of this section:
\begin{proposition}
\label{prop:error}
Assume that the initial arbitrary distribution $\mu_0$ of $X_0$ admits a Radon-Nikodym derivative $\displaystyle
\frac{d \mu_0}{d \mu} $ with respect to the invariant measure $\mu$ of
the dynamics $X_t$, such that
\begin{equation}\label{eq:hyp_mu0}
\int_W \left(\frac{d \mu_0}{d \mu} \right)^2 d \mu < \infty.
\end{equation}
Then, there exists a constant $C$ (which depends on $\mu_0$ but not on
$f$)  such that, for all $t \ge \frac{C}{\lambda_2 - \lambda_1}$, the
error $e(t)$ defined in~\eqref{eq:error} satisfies
$$e(t) \le C \|f\|_{L^\infty} \exp(-(\lambda_2 - \lambda_1)t),$$
where $-\lambda_2 < -\lambda_1 < 0$ are the first two eigenvalues of the
operator $L$ on the weighted space $L^2_\mu$. 
\end{proposition}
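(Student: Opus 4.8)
The plan is to rewrite $e(t)$ as the modulus of a single ratio that can be read off from the spectral expansion~\eqref{eq:pif}, and then to bound its numerator and denominator separately, using the $L^2_\mu(W)$-orthonormality of the eigenbasis $(u_k)$ and hypothesis~\eqref{eq:hyp_mu0}.

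First I would simplify. With $F(x)=\E\bigl(f(T^x_W,X^x_{T^x_W})\bigr)$ as in~\eqref{eq:F} — a bounded measurable function, since $W$ bounded forces $T^x_W<\infty$ a.s.\ and hence $|F|\le\|f\|_{L^\infty}$ — Lemma~\ref{lem:markov} identifies the first term of~\eqref{eq:error} with $\E(F(X_t)\mid T_W\ge t)$, and conditioning on $X_0$ for the process started under $\nu$ identifies the second term with $\int_W F\,d\nu$. Since conditional expectation is linear and leaves constants invariant, $e(t)=\bigl|\E(G(X_t)\mid T_W\ge t)\bigr|$ with $G:=F-\int_W F\,d\nu$. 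Two properties of $G$ carry the proof: $\|G\|_{L^\infty}\le 2\|f\|_{L^\infty}$, hence $\|G\|_{L^2_\mu(W)}\le 2\|f\|_{L^\infty}\sqrt{\mu(W)}$; and $\int_W G\,d\nu=0$, which by~\eqref{eq:nu} reads $\langle G,u_1\rangle_\mu=0$, so the non-decaying first mode is absent from the eigenexpansion of $G$.

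Next I would invoke~\eqref{eq:pif} (its derivation goes through verbatim with $G$ in place of $F$) together with $\langle G,u_1\rangle_\mu=0$. Writing $a_k=\dfrac{\int_W u_k\,d\mu}{\int_W u_1\,d\mu}$, $b_k=\dfrac{\int_W u_k\,d\mu_0}{\int_W u_1\,d\mu_0}$, $g_k=\langle G,u_k\rangle_\mu$ (so $g_1=0$), and $\eta_k:=\lambda_k-\lambda_1$, which satisfies $\eta_k\ge\lambda_2-\lambda_1>0$ for $k\ge2$ by~\eqref{eq:vp} and the simplicity of $\lambda_1$, one obtains
$$e(t)=\left|\frac{\bigl(\int_W u_1\,d\mu\bigr)^{-1}\,\sum_{k\ge2}e^{-\eta_k t}\,g_kb_k}{1+\sum_{k\ge2}e^{-\eta_k t}\,a_kb_k}\right|,$$
where the denominators $\int_W u_1\,d\mu$ and $\int_W u_1\,d\mu_0$ are positive because $u_1>0$ on $W$ and $\mu_0$ is supported in $W$. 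Because $L^{-1}$ is compact and self-adjoint on $L^2_\mu(W)$, $(u_k)$ is a complete orthonormal system, and Parseval's identity yields $\sum_k a_k^2=\mu(W)/\bigl(\int_W u_1\,d\mu\bigr)^2<\infty$ (the constant function $1$ lies in $L^2_\mu(W)$), $\sum_k b_k^2=\bigl\|\tfrac{d\mu_0}{d\mu}\bigr\|_{L^2_\mu(W)}^2/\bigl(\int_W u_1\,d\mu_0\bigr)^2<\infty$ — this is exactly where hypothesis~\eqref{eq:hyp_mu0} is used — and $\sum_k g_k^2=\|G\|_{L^2_\mu(W)}^2\le 4\|f\|_{L^\infty}^2\mu(W)$. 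Put $A=(\sum_{k\ge2}a_k^2)^{1/2}$ and $B=(\sum_{k\ge2}b_k^2)^{1/2}$. Since $e^{-\eta_k t}\le e^{-(\lambda_2-\lambda_1)t}$ for $k\ge2$, Cauchy–Schwarz in $\ell^2$ bounds the sum in the denominator by $AB\,e^{-(\lambda_2-\lambda_1)t}$ in modulus and the numerator by $2\sqrt{\mu(W)}\,B\,\|f\|_{L^\infty}\,e^{-(\lambda_2-\lambda_1)t}/\int_W u_1\,d\mu$. For $t$ large enough that $AB\,e^{-(\lambda_2-\lambda_1)t}\le\tfrac12$, i.e.\ $t\ge\max(0,\ln(2AB))/(\lambda_2-\lambda_1)$, the denominator is at least $\tfrac12$, so
$$e(t)\le\frac{4\sqrt{\mu(W)}\,B}{\int_W u_1\,d\mu}\,\|f\|_{L^\infty}\,e^{-(\lambda_2-\lambda_1)t}.$$
Choosing $C=\max\Bigl(\dfrac{4\sqrt{\mu(W)}\,B}{\int_W u_1\,d\mu},\,\max(0,\ln(2AB))\Bigr)$, which depends on $\mu_0$, $W$, $V$, $\beta$ but not on $f$, proves the statement for every $t\ge C/(\lambda_2-\lambda_1)$.

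The delicate point — and the reason for the manipulations above — is to obtain a bound that is \emph{linear in $\|f\|_{L^\infty}$ and uniform over all test functions $f$}. A uniform-in-$k$ estimate of the numerator coefficients, summed against $\sum_k|b_k|$, would fail, because $(b_k)$ is only known to lie in $\ell^2$, not in $\ell^1$; the pairing must go through Cauchy–Schwarz in $\ell^2$, which forces the numerator coefficients to be \emph{square}-summable with $\ell^2$-norm controlled by $\|f\|_{L^\infty}$. This is precisely why one subtracts $\int_W F\,d\nu$ (annihilating the $u_1$-component, whose coefficient does not decay) and why one tracks $\|G\|_{L^2_\mu}$ rather than merely $\|G\|_{L^\infty}$. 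Keeping the denominator bounded away from $0$ — which dictates the threshold $t\gtrsim 1/(\lambda_2-\lambda_1)$ — is the secondary technical ingredient, and it is where the square-integrability hypothesis~\eqref{eq:hyp_mu0} on $d\mu_0/d\mu$ is needed.
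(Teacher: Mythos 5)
Your proof is correct and follows essentially the same route as the paper's: both pass through the spectral expansion \eqref{eq:pif}, control the numerator and denominator sums by Cauchy--Schwarz/Parseval in $L^2_\mu$ using hypothesis \eqref{eq:hyp_mu0}, and bound the denominator below by $1/2$ for $t \ge C/(\lambda_2-\lambda_1)$. The only (cosmetic) difference is that you center $F$ into $G=F-\int_W F\,d\nu$ with $\langle G,u_1\rangle_\mu=0$ before expanding, whereas the paper expands first and then combines the $k\ge 2$ coefficients into the same quantity $\int_W F u_k\,d\mu-\int_W F\,d\nu\int_W u_k\,d\mu$.
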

\begin{proof}
In order to evaluate \eqref{eq:error}, we first write, using
Lemma~\ref{lem:markov}, 
\begin{align*}
e(t)&=\Big| \E(f({T_W}-t,X_{T_W}) | {T_W} \ge t) - \E^\nu(f({T_W},X_{T_W})) \Big| \\
&= \left|\int_W \E(f(T_W^x,X^x_{T_W^x})) \, {\mathcal L}(X_{t} | {T_W} \ge t) (dx) -  \int_W \E(f(T_W^x,X^x_{T_W^x})) \, d\nu \right| \\
&=  \left| \E(F(X_{t}) | {T_W} \ge t) -   \int_W F d\nu \right|,
\end{align*}
where $F$ is defined by~\eqref{eq:F} and the first term in the
right-hand side has just been expressed in~\eqref{eq:pif}.

We therefore have:
\begin{align*}
e(t)&= \left| \frac{\displaystyle   \int_W F d\nu   + \sum_{k \ge 2} \exp(-(\lambda_k-\lambda_1) t) \frac{\int_W F u_k \, d\mu}{\int_W u_1 \, d\mu} \frac{\int_W u_k \,d\mu_0}{ \int_W u_1 \,d\mu_0} }{\displaystyle  1 +  \sum_{k \ge 2} \exp(-(\lambda_k-\lambda_1) t) \frac{\int_W u_k \, d\mu}{\int_W  u_1 \, d\mu} \frac{\int_W u_k \,d\mu_0}{ \int_W u_1 \,d\mu_0} } -  \int_W F d\nu  \right| \\
&=\left| \frac{\displaystyle     \sum_{k \ge 2} \exp(-(\lambda_k-\lambda_1) t) \frac{ \int_W F u_k \, d\mu - \int_W F d\nu  \int_W u_k \, d\mu}{\int_W u_1 \, d\mu} \frac{\int_W u_k \,d\mu_0}{ \int_W u_1 \,d\mu_0}  }{\displaystyle  1 +  \sum_{k \ge 2} \exp(-(\lambda_k-\lambda_1) t) \frac{\int_W u_k \, d\mu}{\int_W u_1 \, d\mu} \frac{\int_W u_k \,d\mu_0}{ \int_W u_1 \,d\mu_0} } \right|.
\end{align*}
Thus
\begin{align}
\label{eq:alpha}
e(t) \le \exp(-(\lambda_2 - \lambda_1)t) \frac{\displaystyle    \sum_{k \ge 2} \frac{ \left| \int_W F u_k \, d\mu \int_W u_k \,d\mu_0 \right|+ \left| \int_W F d\nu  \int_W u_k \, d\mu\int_W u_k \,d\mu_0 \right|}{\int_W u_1 \, d\mu \int_W u_1 \,d\mu_0} }{\displaystyle  \left| 1 +  \sum_{k \ge 2} \exp(-(\lambda_k-\lambda_1) t) \frac{ \int_W u_k \, d\mu}{ \int_W u_1 \, d\mu} \frac{\int_W u_k \,d\mu_0}{ \int_W u_1 \,d\mu_0} \right|}.
\end{align}
Now, we have by Cauchy-Schwarz and using the fact that $\|F\|_{L^\infty} \le  \|f\|_{L^\infty}$,
\begin{align}
\sum_{k \ge 2}  \left| \int_W F u_k \, d\mu \int_W u_k \,d\mu_0 \right|
&\le \sqrt{\sum_{k \ge 2}  \left| \int_W F u_k \, d\mu\right|^2 } \sqrt{\sum_{k \ge 2}  \left| \int_W u_k \,\frac{d\mu_0}{d\mu} d\mu \right|^2 }   \nonumber\\
&\le \sqrt{\int_W F^2 \, d\mu} \sqrt{\int_W \left(\frac{\,d\mu_0}{\,d\mu}\right)^2 d \mu}\nonumber\\
&\le \sqrt{\mu(W)} \, \|f\|_{L^\infty} \sqrt{\int_W \left(\frac{\,d\mu_0}{\,d\mu}\right)^2 d \mu}
\label{eq:beta1}
\end{align}
and, likewise,
\begin{equation}\label{eq:beta2}
 \sum_{k \ge 2} \left| \int_W F d\nu  \int_W u_k \, d\mu\int_W u_k \,d\mu_0 \right| \le \|f\|_{L^\infty} \sqrt{\mu(W)} \sqrt{\int_W \left(\frac{\,d\mu_0}{\,d\mu}\right)^2 d \mu}.
\end{equation}
Arguing similarly on the denominator of~\eqref{eq:alpha}, we obtain
\begin{align}
\label{eq:gamma}
\left|\sum_{k \ge 2} \exp(-(\lambda_k-\lambda_1) t) \frac{\int_W u_k \, d\mu}{\int_W u_1 \, d\mu} \frac{\int u_k \,d\mu_0}{ \int u_1 \,d\mu_0} \right|
& \le \frac{\exp(-(\lambda_2-\lambda_1) t)}{\int_W u_1 \, d\mu \int u_1 \,d\mu_0}\sum_{k \ge 2} \left|\int_W u_k \, d\mu\right| \left|\int u_k \,d\mu_0\right|\nonumber\\
& \le \frac{\exp(-(\lambda_2-\lambda_1) t)}{\int_W u_1 \, d\mu \int_W u_1 \,d\mu_0}\sqrt{\mu(W)} \sqrt{\int_W \left(\frac{\,d\mu_0}{\,d\mu}\right)^2 d \mu}
\end{align}
so that this quantity is smaller than $1/2$ when $t \ge \frac{C}{\lambda_2-\lambda_1}$, where $C$ is a sufficiently
large constant independent of $f$. Respectively inserting the
inequalities~\eqref{eq:beta1}--\eqref{eq:beta2} and the inequality~\eqref{eq:gamma} at the
numerator and the denominator of~\eqref{eq:alpha}, we obtain that for $t \ge \frac{C}{\lambda_2-\lambda_1}$,
$$e(t) \le  4 \exp(-(\lambda_2 - \lambda_1)t) \sqrt{\mu(W)}  \sqrt{\int_W \left(\frac{\,d\mu_0}{\,d\mu}\right)^2 d \mu} \, \| f\|_{L^\infty} $$
which concludes the proof of
Proposition~\ref{prop:error}.
\end{proof}

\medskip

Note that the
assumption~\eqref{eq:hyp_mu0} on the initial condition $\mu_0$ is not
restrictive. For the conditioned diffusion process, the time
evolution of the density is regularizing. Therefore, if~\eqref{eq:hyp_mu0}
is not satisfied at initial time, the density after a positive time $t_0 
>0$ does satisfy the condition, and we may argue with that density
instead of the initial density in the proof of Proposition~\ref{prop:error}.

\medskip

Proposition~\ref{prop:error} provides an error bound, in total variation norm, on the joint distribution of the exit time from $W$ and the first hitting point on $\partial W$: for $t \ge \frac{C}{\lambda_2-\lambda_1}$,
$$\sup_{f, \|f\|_{L^\infty} \le 1} \Big| \E(f({T_W}-t,X_{T_W}) | {T_W} \ge t) - \E^\nu(f({T_W},X_{T_W})) \Big| \le C \exp(-(\lambda_2-\lambda_1)t).$$
This proposition shows that the correlation time $\tau_{corr}$ should be chosen such that
$$\tau_{corr} \ge \frac{\bar{C}}{\lambda_2-\lambda_1},$$
where $\bar{C}$ is such that $C \exp(-(\lambda_2-\lambda_1)t)$ is small, so that the dephasing step and parallel step, which involve replicas initially   distributed according to the QSD $\nu$ do not introduce a large error in terms of the joint distribution of the exit time from the current state and the next visited state. Notice that one gets a conservative lower bound by taking $\lambda_1=0$, and that $\lambda_2$ may be approximated in practice using an harmonic assumption (namely if $V$ is close to a quadratic function in the well $W$). Within such an approximation, the analysis is also relevant for the Langevin dynamics~\eqref{eq:lang}.

Note that $\displaystyle\frac{1}{\lambda_1}$ is the mean time to leave the well $W$, if the process starts from the QSD. More generally, the mean time to leave the well $W$ is given by
\begin{align*}
\E({T_W})&=\int^\infty \P({T_W} \ge t)\, dt\\
&=\int_0^\infty \int \bar v(t,x) \,d\mu_0 \, dt\\
&=\int_0^\infty \sum_{k \ge 1} \exp(-\lambda_k t) \int_W u_k \, d\mu \int_W u_k \,d\mu_0\, dt\\
&=\sum_{k \ge 1} \frac{1}{\lambda_k} \int_W u_k \, d\mu \int_W u_k \,d\mu_0.
\end{align*}
In order for the algorithm to be efficient, we therefore typically need
that 
\begin{equation}
  \label{eq:adjust}
  \frac{1}{\lambda_2-\lambda_1} \le \tau_{corr} \le \E({T_W}),
\end{equation}
so that, during the decorrelation step, the process reaches the QSD with
good approximation before leaving the well. The pending (and difficult) question is
to make the above estimate more explicit, and therefore practically
useful. Explicitly evaluating $\lambda_2-\lambda_1$ is a question on its
own. Considering more specific situations (metastable well in the limit
of a small parameter, simple 2d periodic examples, ...) could help for
this purpose.

\acknowledgement{The first three authors are greatly and deeply indebted to Arthur Voter for taking the time to patiently explain them the genesis and development of the parallel replica dynamics. This work could not have been possible without his priceless input. CLB and TL acknowledge enlightening 
  discussions with Pablo A. Ferrari (during a workshop in Oberwolfach on large scale stochastic dynamics) and Samuel Herrmann. The second author (TL)
acknowledges support from the Agence Nationale de la Recherche, under grant 
 ANR-09-BLAN-0216-01 (MEGAS). ML and DP acknowledge funding by the US Department of Energy
under award DE-FG02-09ER25880/DE-SC0002085.
Work at Los Alamos National Laboratory (LANL) was funded by the Office
of Science, Office of Advanced Scientific Computing Research.
LANL is operated by Los Alamos National Security, LLC, for the
National Nuclear Security Administration of the U.S. DOE under Contract
No. DE-AC52-06NA25396.
}


\end{document}